\newtheorem{theo}{Theorem}
\newtheorem{lemma}[theo]{Lemma}
\newtheorem{prop}[theo]{Proposition}
\newtheorem{rem}[theo]{Remark}
\newtheorem{conj}{Conjecture}
\newtheorem{ex}[theo]{Example}
 \def\mG{\mathsf{G}}
 \def\mV{\mathsf{V}}
 \def\mE{\mathsf{E}}
 \def\mv{\mathsf{v}}
 \def\me{\mathsf{e}}
 \def\mw{\mathsf{w}}
 \DeclareMathOperator{\Real}{Re}
\providecommand{\abs}[1]{\lvert#1\rvert}
\providecommand{\Norm}[1]{\left\lVert#1\right\rVert}
\providecommand{\norm}[1]{\left\lVert#1\right\rVert}
\providecommand{\Ker}{{\mbox{Ker}}}
\providecommand{\sgn}[1]{\mbox{sgn}}
\providecommand{\ran}[1]{\mbox{ran}}
\providecommand{\deg}[1]{\mbox{deg}}
\providecommand{\diag}[1]{\mbox{diag}}
\providecommand{\Real}{\mbox{Re}}
\providecommand{\det}[1]{\mbox{det}}
\providecommand{\C}{\mathds{C}}
\providecommand{\Z}[1]{\mathds{Z}}
\providecommand{\N}[1]{\mathds{N}}
\providecommand{\K}[1]{\mathds{K}}
\providecommand{\In}[1]{\mbox{I}} 
\providecommand{\Ex}[1]{\mbox{E}} 
\providecommand{\dd}[1]{\mbox{d}}
\providecommand{\ddx}[1]{\mbox{dx}}
\providecommand{\ddy}[1]{\mbox{dy}}
\DeclareMathOperator{\au}{\underline{a}}
\DeclareMathOperator{\Id}{Id}
\numberwithin{equation}{section}
\numberwithin{theo}{section}
\author{Amru Hussein}
\address{Amru Hussein, FB 08 - Institut f\"{u}r Mathematik,
Johannes Gutenberg-Universit\"{a}t Mainz, Staudinger Weg 9,
55099 Mainz,
Germany}
\email{hussein@mathematik.uni-mainz.de}
\author{Delio Mugnolo}
 \address{Delio Mugnolo, Institut f\"ur Analysis, Universit\"at Ulm, 89069 Ulm, Germany}
\email{delio.mugnolo@uni-ulm.de}
\thanks{This article was partially completed during a visit of the first author at the University of Ulm under the financial support of the Land Baden--W\"urttemberg in the framework of the \emph{Juniorprofessorenprogramm} -- research project on `Symmetry methods in quantum graphs'. Parts of this article are contained in the first author's PhD thesis. Both authors would like to express their gratitude to Stefano Cardanobile and Stefan Rotter (BCN Freiburg) for pleasant and interesting discussions on neuronal modeling.} 
\title[Quantum graphs with mixed dynamics]{Quantum graphs with mixed dynamics:\\ the transport/diffusion case}
\subjclass[2000]{34B45, 47D06, 47N50}
\keywords{Non-local conditions; $C_0$-semigroups; Mixed dynamics; Quantum graphs}
\begin{document}
\date{\today}
\begin{abstract}
We introduce a class of partial differential equations on metric graphs associated with mixed evolution: on some edges we consider diffusion processes, on other ones transport phenomena. This yields a system of equations with possibly nonlocal couplings at the boundary. We provide sufficient conditions for these to be governed by a contractive semigroup on a Hilbert space naturally associated with the system. We show that our setting is also adequate to discuss specific systems of diffusion equations with boundary delays.
\end{abstract} 
  \maketitle

\section{Introduction}
In the literature, usually considered problems concern networks whose ongoing dynamical processes are homogeneous: on \textit{each} link, the evolution is modeled as a transport, a diffusion, a wave, a beam, etc. However, many physical models consist of coexisting, interacting processes of different type. On different edges, a different kind of dynamics may take place; or else, one may introduce fictitious, auxiliary edges in the model in order to describe certain phenomena (like delays) in a more efficient way. Accordingly, our aim is to discuss a Cauchy problem for a system of partial differential equations of mixed type: a former part of the system will satisfy  heat equations, whereas a latter part will satisfies transport equations. We assume the interactions of the different subsystems to take place only at the boundary. In this way we can translate our system into a vector-valued abstract Cauchy problem with suitable, non-standard coupled boundary conditions. 
The topic of partial differential equations on networks has become very popular in the last fifteen years, mostly due to its connections with quantum chaos which has motivated the introduction of the keyword `quantum graphs'.
Partial differential equations on metric graphs that are associated with \emph{observables} are usually studied in the framework of the theory of quantum graphs. This looks natural, considering the underlying physical motivations. So far, the large majority of investigations have been devoted to self-adjoint Laplacians and other second-order elliptic operators. We mention the comprehensive surveys~\cite{KuchBer,Kuc04} and the article \cite{KS1999}. On the other hand, also some ten years ago some pioneering investigations on systems of transport equations on metric graphs have been commenced in~\cite{KraSik05}. The study of the first derivative on metric graphs inside the mathematical physics community seems to be less common and dwells on the interpretation of $i\frac{d}{dx}$ as the momentum operator. We are only aware of the pioneering study~\cite{Car99} and of the later articles~\cite{Exn12,JorPedTia11}, where some of the results of~\cite{Dor05,KraSik05} have been re-discovered and complemented by thorough spectral investigations.

The aim of this paper is to connect these two theories, which have so far been studied separately.  The easiest case of the coupling of \emph{one} diffusion and \emph{one} transport equation (both possibly vector-valued) has been considered by Gastaldi and Quarteroni in~\cite{GasQua89}.  This work was presented by its authors as a first step towards the coupling of a Euler and a Navier-Stokes equation. A related viscosity analysis has been performed in~\cite{BarMor05}. 

In~\cite{GasQua89}, well posedness of the problem has been studied by means of viscosity methods under certain (relatively strict) coercivity assumptions. We are going to weaken the coercivity assumptions and study a general system consisting of finitely many coupled intervals of either type. We are going to do so by considering a large class of coupled boundary conditions for operator matrices including both second and first derivatives. 

While the proposed dynamics actually seems to reflect the observed phenomena, it is very difficult to make an educated guess when it comes to propose \emph{natural} transmission conditions. It seems that the search for correct transmission conditions, which is easy in the case of purely diffusive \cite{Lum80} or pure transport-like systems \cite{KraSik05}, is much less trivial in the mixed case. As Gastaldi and Quarteroni put it in~\cite[section 1]{GasQua89}, `When coupling Euler and Navier-Stokes equations, the proper interface conditions are not obvious, in advance. A possible way of deducing them is to see the coupled problem as a limit of two coupled Navier-Stokes problems with vanishing viscous terms (in either one of the regions)'. But even this limiting process is quite delicate, as the analysis in~\cite{BarMor05,GasQua89} shows.

Our main result, theorem~\ref{mth}, states that systems of the kind discussed above are well posed under a large class of transmission conditions. This may hopefully suggest suitable conditions in specific settings. Furthermore, we complement a spectral analysis of the considered system of typical features of semigroups for heat and transport equations. In particular, contractivity of a semigroup guarantees that the norm of solutions to the initial value problem are not larger than the norm of the initial data. 

Unlike in~\cite{GasQua89}, a possible motivation for the introduction of our setting arises from some biomathematical considerations. More precisely, it is known that electric signal coming from a neuron undergoes a certain synaptic delay before reaching another neuron, and cannot turn back: this suggests to model this process by a system of diffusion (in the dendrites or axons) and transport (in the synapse) equations.  This is explained in some detail in section~\ref{subs:dendro}.

In section~\ref{sec:opermetr}, we introduce the specific class of boundary conditions we are going to investigate. In section~\ref{sec:main}, we discuss, in dependence of the boundary conditions, some sufficient conditions for well posedness of the problem in the $L^2$-setting,
collecting all proofs in section~\ref{proofs}. In section~\ref{sec:spectr}, we derive a secular equation for our system and an explicit representation of the resolvent.

It turns out that our setting can be adapted to discuss certain classes of coupled systems of diffusion equations with boundary delays. We describe in Subsection~\ref{subs:dendro-2} how to apply our abstract results to the  motivating problem from Section~\ref{subs:dendro}.

Most of the present paper has been written with a main focus on evolution equations of parabolic type. Even when the evolution equation is not governed by an analytic semigroup, we still think of our problem as a sort of degenerate diffusion equation. For this reason, in this paper we investigate some properties that are typical for diffusive problems -- and less so for quantum mechanical ones. In order to keep this article as self-contained as possible, we recall in the appendix all the definition and abstract notions  we are going to need. We refer to~\cite{EngNag06} for more details and proofs.

\section{Delayed diffusion equations via mixed PDEs on graphs}\label{subs:dendro}

Let us discuss a concrete example, which involves the problems outlined in the introduction and suggests a possible interplay with delayed diffusion equations. It is a classical idea, thoroughly developed e.g.\ in~\cite{BatPia05}, that delays can be mathematically modeled by means of (auxiliary) transport phenomena. While this theory is classical whenever the delay is `distributed' -- i.e. it acts on each point of the domain of diffusion -- it is less standard and technically more involved whenever the delays only concern the boundary values. The situation we want to describe is the following. We consider a ramified structure with an ongoing diffusion process. Incoming particles are absorbed in some nodes and stored there for some time (which depends solely on the features of the node itself), before triggering a flow in the incident edges. This phenomenon can be described by attaching to each such node a fictitious loop, which the particles have to cross before reaching the adjacent edges. A comparable approach has been presented in the recent article~\cite{BayDorRha12}.

As a tentative motivation of the investigation of this class of problems, we briefly discuss a (much simplified) model of a \emph{dendrodendritic chemical synapse}, referring the reader to any introductory textbook on neuronal modeling (e.g.,~\cite[chapters 2 and 9]{Sco02} and~\cite[chapter 5]{DayAbb05}) for basic notions and some miscellaneous models, including delayed ones. Also for the sake of simplicity, we drop the absorption term and therefore replace the cable equation usually discussed in theoretical neuroscience by a simpler diffusion equation. (This is in fact a bounded perturbation; hence it can be neglected when it comes to discussing well posedness). We also remark that our model essentially applies to the case of less exotic axodendritic synapses, if we linearize the active transport phenomena typical of the latter. Of course, although the situation considered here is motivated by a toy neuronal model, it can also be regarded as a simple example of a general diffusion equation with boundary delays.

Consider two dendrites (modeled by two intervals $\me_1,\me_2$) that are incident in the synapse $\mv$, which is terminal endpoint of $\me_1$ and initial endpoint of $\me_2$. The synaptic input coming from $\me_1$ undergoes a delay of $\tau_{\rm del}=1$ before reaching $\me_2$ and cannot turn back. For the sake of simplicity, we also impose \emph{sealed end} conditions on the first dendrite $\me_1$ as well as on the second endpoint of $\me_2$: 

\begin{center}
\begin{tikzpicture}
\node(pseudo) at (-1,0){};
\node(0) at (0,0)[shape=circle,draw] {};
\node(1) at (2,0)[shape=circle,draw] {};
\node(2) at (4,0)[shape=circle,draw] {};
\path [-]
  (0)      edge                 node [above]  {$\me_1$}     (1)
  (1)      edge                 node [above]  {$\me_2$}     (2);
  \path [->]
  (1)      edge [loop above]    node [above]  {$\tau_{\rm del}$}     ();
\end{tikzpicture}
\end{center}
Of course, longer chains of neurons might be modeled in a similar way.

The synaptic input is an action potential that lets neurotransmitters be released by synaptic vesicles. Experimental observations seem to suggest that no obvious (linear) algebraic relation exists between the pre- and post-synaptic potential in the dendrites -- i.e. between the boundary values of the unknowns $u_1$ and $u_2$ in the diffusion equations. We propose to discuss this system by a network diffusion problem with boundary delay
\begin{equation}\tag{BD}
\left\{\begin{array}{rcll}
\dot{u}_1(t,x)&=& u''_1(t,x), &t\geq 0,\; x\in(0,1),\\
\dot{u}_2(t,x)&=& u''_2(t,x), &t\geq 0,\; x\in(0,1),\\
u_1(t,1)&=&u'_1(t,1),&t\geq 0,\\
-u'_2(t,0)&=& u'_1(t-1,1), &t\geq 0,\\
u'_1(t,0)&=&0, &t\geq 0,\\
u'_2(t,1)&=&0, &t\geq 0,\\
u_1(0,x)&=&f_1(0,x),&x\in(0,1),\\
u_2(0,x)&=&f_2(0,x),&x\in(0,1),\\
u_1(t,1)&=& {f}_{\rm del}(t),&t\in [-1,0].\\
\end{array}
\right.
\end{equation}

The first boundary condition, of Robin-type, can be interpreted by saying that part of the ions reaching the presynaptic nerve terminal is reflected into the entrance dendrite. The second condition is actually the relevant one. It reflects the fact that the conduction speed of the signal in the dendrites is approximatively constant, at least within the same cortical area. In other words, even if the postsynaptic potential will in general have a different amplitude from the presynaptic one, their speed of propagation will be the same. Finally, it is easy to convince ourselves that the above problem is undetermined if the last initial condition on the delay term is not imposed.

In order to implement the delay feature into the node conditions, we introduce an edge $\me_{\rm del}$ and an unknown $u_{\rm del}$ to model the transport of neurotransmitters in the synaptic cleft between the pre- and the postsynaptic neurons, hence effectively introducing a delay phenomenon:

\begin{center}
\begin{tikzpicture}
\node(pseudo) at (-1,0){};
\node(0) at (0,0)[shape=circle,draw] {};
\node(1) at (2,0)[shape=circle,draw] {};
\node(2) at (4,0)[shape=circle,draw] {};
\node(3) at (6,0)[shape=circle,draw] {};
\draw [->] (2.2,0) -- (3,0);
\path [-]
  (0)      edge                 node [above]  {$\me_1$}     (1)
  (1)      edge                 node [above]  {$\me_{\rm del}$}     (2)
  (2)      edge                 node [above]  {$\me_2$}     (3);
\end{tikzpicture}
\end{center}
Our aim is now to replace the fourth, i.e. the delayed equation in ${\rm(BD)}$ by two node conditions in the endpoints of $\me_{\rm del}$. More precisely, we impose that 
\begin{equation}\label{cond1}
u'_1(t,1)=u_ {\rm del}(t,0), \qquad t\geq 0.
\end{equation}
as well as
\begin{equation}\label{cond3}
-u'_2(t,0)=u_{\rm del}(t,1),\qquad t\geq 0.
\end{equation}
In other words, the flow of postsynaptic potential and the flow of presynaptic potential agree, even if its transmission undergoes a certain delay (which we have normalized).

We are hence led to consider an (undelayed) initial boundary value problem 
\begin{equation}\tag{BD$'$}
\left\{\begin{array}{rcll}
\dot{u}_1(t,x)&=& u''_1(t,x), &t\geq 0,\; x\in(0,1),\\
\dot{u}_{\rm del}(t,x)&=& -u'_{\rm del}(t,x), &t\geq 0,\; x\in(0,1),\\
\dot{u}_2(t,x)&=& u''_2(t,x), &t\geq 0,\; x\in(0,1),\\
u'_1(t,1)&=&u_ {\rm del}(t,0),& t\geq 0,\\
u'_1(t,1)&=& u_1(t,1), &t\geq 0,\\
-u'_2(t,0)&=& u_{\rm del}(t,1),&t\geq 0,\\
u'_1(t,0)&=&0, &t\geq 0,\\
u'_2(t,1)&=&0, &t\geq 0,\\
u_1(0,x)&=&f_1(0,x),&x\in(0,1),\\
u_2(0,x)&=&f_2(0,x),&x\in(0,1),\\
u_{\rm del}(0,x)&=& {f}_{\rm del}(0,x),&x\in (0,1),\\
\end{array}
\right.
\end{equation}
i.e. we have got rid of the boundary delay by passing to the a larger state space. Observe that the above model is intrinsically non-symmetric in the sense that potential can only flow from dendrite $\me_1$ to $\me_2$, but not vice versa. This is a typical feature of \emph{chemical synapses}, as opposed to \emph{electric} ones. 

One can check that the problems ${\rm (BD)}$ and ${\rm (BD')}$ are equivalent. It should be observed that transport-based synaptic transmission models are not very common in the literature. Due to their numerical and analytic complexity they are actually often replaced by convective-diffusive (or even purely diffusive) models. A convincing pleading of a transport approach can be found in~\cite[sections~2 and 6]{Whe98}. In the following, a general framework is discussed that allows us to deal not only with this motivating example.

\section{Boundary conditions for a mixed operator on a metric graph}\label{sec:opermetr}

We use the standard construction for the singular manifold on which the dynamic of the system is going to take place: we consider finitely many intervals of finite length that are connected to realize a simple finite metric graph $\mG$ with node set $\mV$ and edge set $\mE$. For the purposes of this paper, it is important to consider a partition of $\mE$ in two disjoint subsets $\mE_d$ and $\mE_t$, which are going to represent the edges on which \emph{diffusion} and \emph{transport} phenomena are going to take place, respectively. Hence, we denote by $\me_{d1},\ldots,\me_{dD}$ and $\me_{t1},\ldots,\me_{tT}$ the edges of the metric graph. To each edge $\me_{di}$ or $\me_{tj}$, we associate a length $a_{di}$ or $a_{tj}$, respectively, which in turn determines an orientation of each edge from $0$ to the other endpoint. More specifically, this orientation leads to the representation 
\[
\me=\overrightarrow{(\mv,\mw)},\qquad \hbox{for }\mv,\mw \in \mV,
\]
and in this case we write $\me(0)=\mv$, $\me(a)=\mw$. (Here and in the following, $a$ denotes the generic length -- that is, either $a_{di}$ or $a_{tj}$, depending on the context.)

The structure of the network is given by the $|\mV|\times |\mE|$-\emph{outgoing} and \emph{ingoing incidence matrices}
${\mathcal I}^+:=(\iota^+_{\mv \me})$ and ${\mathcal I}^-:=(\iota^-_{\mv \me})$ defined by
\begin{equation}\label{inout}
\iota^+_{\mv \me}:=\left\{
\begin{array}{rl}
1, & \hbox{if } \me(0)=\mv,\\
0, & \hbox{otherwise},
\end{array}
\right.
\qquad\hbox{and}\qquad
\iota^-_{\mv \me}:=\left\{
\begin{array}{rl}
1, & \hbox{if } \me(a)=\mv,\\
0, & \hbox{otherwise.}
\end{array}
\right.
\end{equation}
The matrix ${\mathcal I}:=(\iota_{\mv \me})$ defined by ${\mathcal I}:={\mathcal I}^+-{\mathcal I}^-$ is the incidence matrix of $\mG$. Furthermore, let $\Gamma(\mv)$ be the set of all edges incident in $\mv$, i.e. 
$$\Gamma(\mv):=\left\{\me \in \mE: \me(0)=\mv\hbox{ or } \me(a)=\mv\right\}.$$ 
For the sake of notational simplicity, if $\me=\overrightarrow{(\mv,\mw)}$, we denote the value of a function $u_\me:[0,a]\to\mathbb C$ on $\me$ at $0$ and $a$ by $u_\me(\mv)$ and $u_\me(\mw)$, respectively. With an abuse of notation, we also set
$u_\me(\mv)=0$ whenever $\me\notin\Gamma(\mv)$. 
By assumption, $\mE$ is a finite measure space. The space $L^2(\mE)$ of square integrable functions defined on the intervals associated with the edges in $\mE$ becomes a Hilbert space with respect to the natural scalar product 
\[
\langle u,v\rangle:=\int_\mE u \, \overline{v}:=\sum_{i=1}^D \int_{0}^{a_{di}} u_{\me_{di}}(s) \overline{v_{\me_{di}}(s)} ds+ \sum_{j=1}^T \int_{0}^{a_{tj}} u_{\me_{tj}}(s) \overline{v_{\me_{tj}}(s)} ds.
\]
By $L^2(\mE_d)$ and $L^2(\mE_t)$, we denote the $L^2(\mE)$-functions with support on the edges belonging to $\mE_d$ and $\mE_t$, respectively. Note that any element $u\in L^2(\mE)$ admits the representation
\begin{equation}
\label{udutdecomp}
u\equiv \begin{bmatrix} u_d \\ u_t\end{bmatrix}, \qquad \mbox{where }u_d\in L^2(\mE_d)\hbox{ and }u_t\in L^2(\mE_t).  
\end{equation}
Finally, by $H^m(\mE_d)$ and $H^m(\mE_t)$ we denote the Sobolev spaces of square integrable functions that are $m$-times weakly differentiable on each edge with weak derivatives in $L^2(\mE_d)$ and $L^2(\mE_t)$, respectively. We stress that we are not imposing boundary conditions of any type on the functions in $H^m(\mE_d)$ or $H^m(\mE_t)$. Furthermore, $H^m_0(\mE_d)$ denote the set of all functions $u\in H^m(\mE_d)$  with 
\begin{eqnarray*}
u_{\me}^{(j)}(\mv)=u_{\me}^{(j)}(\mw)=0 & \mbox{for any edge $\me=\overrightarrow{(\mv,\mw)}\in \mE_d$ and $0\leq j\leq m-1$}.
\end{eqnarray*}
We can define $H^m_0(\mE_t)$ likewise. 

On $L^2(\mE)$, we formally consider the diagonal operator matrix
\begin{eqnarray}\label{differA}
A:=\begin{bmatrix}
\frac{d^2}{dx^2} & 0 \\ 0 & -\frac{d}{dx} 
\end{bmatrix},\quad\mbox{i.e.} & Au:=A\begin{bmatrix}
u_d \\ u_t
\end{bmatrix}:=\begin{bmatrix}
u''_d \\ -u'_t
\end{bmatrix}.
\end{eqnarray}
We are going to present a class of $m$-dissipative realizations of $A$ whose domains contain the minimal one $H_0^2(\mE_d)\oplus H^1_0(\mE_t)$ and are contained in the maximal one $H^2(\mE_d)\oplus H^1(\mE_t)$: the fact that these are the extreme relevant cases follows from the observation that in both cases the transport and the diffusion part are clearly edge-wise decoupled.

\begin{lemma}\label{lem:sch}
The vector space 
$H^1(\mE):=H^1(\mE_d)\oplus H^1(\mE_t)$ is a Hilbert space with respect to the natural inner product
\begin{equation*}
\langle u, v\rangle_{H^1(\mE)}:=\langle u', v'\rangle_{L^2(\mE_d)}+\langle u',v'\rangle_{L^2(\mE_t)}+\langle u, v\rangle_{L^2(\mE)}.
\end{equation*}
Also the maximal domain $D(A)$ of $A$, defined by 
$D(A):=H^2(\mE_d)\oplus H^1(\mE_t)$, is a Hilbert space with respect to the inner product
\[
\langle u'',v''\rangle_{L^2(\mE_d)}+\langle u, v\rangle_{H^1(\mE)}.
\]
Their embedding into $L^2(\mE)$ is dense and of $p$th Schatten class for all $p>1$. Furthermore, there exists $C>0$ such that the Gagliardo--Nirenberg-type estimate
\begin{equation}\label{gagliardo}
\|u\|^2_{L^\infty(\mE)} \leq C \|u\|_{L^{2}(\mE)} \|u\|_{H^{1}(\mE)}\qquad \hbox{for all }u\in H^1(\mE)
\end{equation}
holds.
\end{lemma}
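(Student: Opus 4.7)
The plan is to reduce each of the four claims to the corresponding scalar statement on a single interval, since all spaces in question are finite orthogonal direct sums of classical Sobolev spaces on bounded intervals.

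The Hilbert-space structure of $H^1(\mE)$ and $D(A)$ follows at once from the well-known fact that $H^1(0,a)$ and $H^2(0,a)$ are Hilbert spaces under their standard Sobolev inner products: the inner products displayed in the statement are just the corresponding finite-direct-sum inner products. Density of $H^1(\mE)\hookrightarrow L^2(\mE)$ and $D(A)\hookrightarrow L^2(\mE)$ follows edgewise from density of $C_c^\infty((0,a))\subset L^2(0,a)$, by extending test functions by zero to the remaining edges.

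The Schatten-class assertion is the crux of the lemma, because the singular-value decay $\sigma_n\sim n^{-1}$ for the scalar $H^1\hookrightarrow L^2$ embedding sits exactly at the threshold separating $p>1$ from $p=1$. I would prove it first for a single embedding $J:H^1(0,a)\hookrightarrow L^2(0,a)$. The positive compact operator $J^*J:H^1(0,a)\to H^1(0,a)$ is characterized by $\langle J^*J\phi,v\rangle_{H^1(0,a)}=\langle\phi,v\rangle_{L^2(0,a)}$; writing the eigenvalue equation $J^*J\phi=\mu\phi$ in weak form shows that $\phi$ is an eigenfunction of $-\tfrac{d^2}{dx^2}$ on $(0,a)$ with Neumann boundary conditions for the eigenvalue $\lambda=(1-\mu)/\mu$. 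The Neumann spectrum being $\{(n\pi/a)^2:n=0,1,2,\ldots\}$, one obtains $\mu_n=(1+(n\pi/a)^2)^{-1}$, so $\sigma_n(J)=\sqrt{\mu_n}\sim a/(n\pi)$, and $\sum_n\sigma_n(J)^p<\infty$ exactly when $p>1$. The scalar inclusion $H^2(0,a)\hookrightarrow L^2(0,a)$ factorizes through $J$ and has singular values decaying even faster; since Schatten ideals are stable under finite orthogonal direct sums of bounded operators, both inclusions $H^1(\mE)\hookrightarrow L^2(\mE)$ and $D(A)\hookrightarrow L^2(\mE)$ lie in $S_p$ for every $p>1$.

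For the Gagliardo--Nirenberg-type inequality, I fix an edge of length $a$ and $u\in H^1(0,a)$, and use the pointwise identity
\[
|u(x)|^2=|u(y)|^2+2\Real\int_y^x\overline{u(t)}\,u'(t)\,dt,\qquad x,y\in[0,a].
\]
Cauchy--Schwarz on the integral, followed by integration in $y\in(0,a)$ and division by $a$, yields $|u(x)|^2\leq \tfrac{1}{a}\|u\|_{L^2(0,a)}^2+2\|u\|_{L^2(0,a)}\|u'\|_{L^2(0,a)}\leq C_a\,\|u\|_{L^2(0,a)}\|u\|_{H^1(0,a)}$ with $C_a=\max(1/a,2)$. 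Taking the maximum of the $C_a$ over the finitely many edges and summing gives \eqref{gagliardo}.

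The principal technical obstacle is the Schatten-class step: because the decay rate lies on the borderline $p=1$, a rough Weyl-type estimate is insufficient and one must invoke the explicit Neumann eigenvalue formula. The remaining three statements amount to straightforward bookkeeping.
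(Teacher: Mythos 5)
Your proof is correct, and on the two substantive points it takes a different (and more self-contained) route than the paper. For the Schatten-class claim the paper simply invokes compactness of the graph and cites Gramsch's quantitative Rellich embedding theorem \cite{Gra68}, whereas you derive the singular values of $J:H^1(0,a)\hookrightarrow L^2(0,a)$ explicitly by identifying $J^*J$ with the resolvent of the Neumann Laplacian, getting $\sigma_n\sim a/(n\pi)$ and hence membership in $S_p$ exactly for $p>1$; this is a genuine added value, since it makes visible why the threshold sits at $p=1$ and why the statement cannot be improved to trace class, and your reduction of the $H^2$-embedding via the ideal property and of the graph case via finite direct sums is sound. For the Gagliardo--Nirenberg estimate the paper first treats the case $u(0)=0$ via $|u(y)|^2\le\int(u^2)'$ and then reduces the general case by an affine manipulation (whose formula as printed is in fact garbled), while you use the cleaner averaging identity $|u(x)|^2=|u(y)|^2+2\Real\int_y^x\overline{u}\,u'$ followed by integration in $y$; this avoids the case distinction entirely. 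Two cosmetic remarks: your constant should be $C_a=\tfrac1a+2$ (or $2\max(1/a,2)$) rather than $\max(1/a,2)$, since the term $\tfrac1a\|u\|_{L^2}^2$ is bounded by $\tfrac1a\|u\|_{L^2}\|u\|_{H^1}$ and must be added to, not compared with, the second term; and in the last step taking the maximum of the edgewise bounds already suffices, no summation is needed. Neither affects the validity of the argument.
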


\begin{proof}
Throughout this paper, we are assuming our graph to be compact, hence the embeddings of the Hilbert spaces $H^1(\mE_d)\oplus H^1(\mE_t)$ and  $H^2(\mE_d)\oplus H^1(\mE_t)$ into $L^2(\mE_d)\oplus L^2(\mE_t)$ are of $p$th Schatten class for all $p>1$, cf~\cite{Gra68}. 

A constant $C>0$ can always be found so that~\eqref{gagliardo} holds, as this inequality can be reduced to the case of intervals. If namely $u\in H^1(0,1)$; then
\begin{equation}\label{traceineq}
|u(y)|^2\leq 2\sqrt{2} \|u\|_{L^{2}(0,1)} \|u\|_{H^{1}(0,1)}\qquad \hbox{for all }y\in [0,1].
\end{equation}
Indeed if $u\in H^1(0,1)$ is such that $u(0)=0$, then we can write
$$
|u(y)|^2\le \int_0^1 (u^2)'(x)\, dx\qquad \hbox{for all }y\in [0,1],
$$
and by Cauchy-Schwarz's inequality, we get
$$
|u(y)|^2\leq 2 \|u\|_{L^{2}(0,1)} \|u'\|_{L^{2}(0,1)}\qquad \hbox{for all }y\in [0,1].
$$
For a general $u\in H^1(0,a)$, it suffices to apply the previous estimate to $u(x)=(1-\frac{x}{a})u(\frac{x}{a})+\frac{x}{a}u(\frac{x}{a})$.
To derive~\eqref{gagliardo} on the whole graph, it suffices to sum up the estimates obtained on {each edge separately}.\end{proof}

We now impose conditions that force the numerical range to lie in a left half-plain of the complex plain.
 An elementary integration by parts yields 
\begin{equation}\label{intbp}
\begin{array}{rcl}
\displaystyle{\langle Au,v \rangle} &=& \quad \displaystyle{ \int_{\mE_d} \langle u_d'', v_d\rangle - \int_{\mE_t}\langle u_t',v_t\rangle }\\
&=&\displaystyle{-\int_{\mE_d} \langle u'_d, v'_d\rangle + [ u_d',v_d ]_{\partial \mE_d} + \int_{E_t}\langle u_t, v_t' \rangle-[ u_t,v_t]_{\partial \mE_t},}
\end{array}
\end{equation}
for $u,v\in H^2(\mE_d)\oplus H^1(\mE_t)$, where we have introduced a notation based on the (not sign definite) sesquilinear forms
\begin{eqnarray*}
[u_d,v_d]_{\partial \mE_d}&:=&
\sum_{i=1}^{D}\left( u_{di}(a_{di})\overline{v_{di}}(a_{di})-u_{di}(0)\overline{v_{di}}(0)\right), \\
\left[u_t,v_t\right]_{\partial \mE_t}&:=&
\sum_{j=1}^{T}\left( u_{ti}(a_{tj})\overline{v_{tj}}(a_{tj})-u_{tj}(0)\overline{v_{tj}}(0)\right).
\end{eqnarray*}
While the equation~\eqref{intbp} is not particularly appealing, the real part of the associated form is	
\begin{equation}\label{forma}
\Real \langle Au,u\rangle= -\int_{\mE_d} \abs{u_d'}^2 + \Real [ u_d',u_d]_{\partial \mE_d} - \frac{1}{2}\left[u_t,u_t\right]_{\partial \mE_t},\qquad u\in H^2(\mE_d)\oplus  H^1(\mE_t).
\end{equation}
{For the sake of notational simplicity we introduce the $2|\mE_d|+|\mE_t|$ dimensional (``boundary'') Hilbert space 
\begin{eqnarray}\label{H}
\mathcal{H}= \mathcal{H}^+_d \oplus \mathcal{H}^-_d \oplus \mathcal{H}_t, & {\mathcal H}^{\pm}_d:={\mathbb C}^{|\mE_d|}, & {\mathcal H}_t:={\mathbb C}^{|\mE_t|}.
\end{eqnarray}}
We then define for all $u\in H^2(\mE_d)\oplus   H^1(\mE_t)$, the two vectors $\underline{u},\underline{\underline{u}}\in {\mathcal H}$, where
\begin{eqnarray}\label{Randwerte}
\underline{u}:=\begin{bmatrix} 
\{u_{di}(a_{di})\}_{1\le i\le D} \\ 
\{u_{di}(0)\}_{1\le i\le D} \\
2^{-\frac12}\{u_{tj}(a_{tj})+u_{tj}(0)\}_{1\le j\le T}
\end{bmatrix}
& {\mbox{and}} &
\underline{\underline{u}}:=\begin{bmatrix} 
\{u_{di}'(a_{di})\}_{1\le i\le D} \\ 
\{-u_{di}'(0)\}_{1\le i\le D} \\
 2^{-\frac12}{\{-u_{tj}(a_{tj})+u_{tj}(0)\}_{1\le j\le T}}
 \end{bmatrix}
\end{eqnarray}
are given with respect to the decomposition of $\mathcal{H}$ explained in~\eqref{H}. In particular, with this specific representation for the boundary values, equation~\eqref{forma} can be compactly re-written as	
\begin{equation}
{\Real}\langle Au,u\rangle= -\int_{\mE_d} \abs{u_d'}^2 {+ \Real} \langle \underline{\underline{u}},\underline{u}\rangle_{\mathcal{H}},\qquad u\in H^2(\mE_d)\oplus   H^1(\mE_t).\label{forma2}
\end{equation}
These computations motivates us to introduce a class of boundary conditions of the form
\begin{equation}\label{bcH}
P^\perp\underline{\underline{u}} + (L +P) \underline{u}=0,
\end{equation}
where $P$ is an orthogonal projector acting {in} ${\mathcal H}$, $P^\perp:= \Id-P$ {denotes the complementary orthogonal projector} and the matrix 	$L$ is an operator {in the subspace} $\Ker P{\subset \mathcal{H}}$ (whose extension by 0 to the whole of $\mathcal H $ we still denote by $L$). The boundary conditions can be equivalently written as
\begin{equation*}
P^\perp\underline{\underline{u}} + L \underline{u}=0 \quad \mbox{and} \quad  P\underline{u}=0.
\end{equation*}
We finally define the operator $A_{P,L}$ which is studied in this work as the operator $A$ equipped with the boundary conditions defined in~\eqref{bcH}, i.e.,
\begin{equation}
\label{APL}
\begin{array}{rcl}
 D(A_{P,L})&:=&\left\{ u\in H^2(\mE_d)\oplus   H^1(\mE_t) \mid P^\perp\underline{\underline{u}} + (L +P) \underline{u}=0 \right\},\\
 A_{P,L}u&:=&Au.
\end{array}
\end{equation}
Hence, inserting~\eqref{bcH} into \eqref{forma2} yields
\begin{equation*}
{\Real}\langle A_{P,L} u,u\rangle= -\int_{\mE_d} \abs{u_d'}^2 {- \Real} \langle L\underline{u},\underline{u}\rangle_{\mathcal{H}}.
\end{equation*}

\section{Main results}\label{sec:main}

We are finally in the position to state the main result of this article. The proof will be postponed to Section~\ref{proofs}.

\begin{theo}\label{mth}
Let $P$ be an orthogonal projector acting on ${\mathcal H}$ and $L$ a linear operator on ${\Ker}P$ such that $-L$ satisfies the condition
\begin{equation}\label{eq:maincond}
-\Real \langle Lx,x \rangle_{\mathcal{H}}\le \omega \left(|x_{d}^+|^2+|x_{d}^-|^2\right) \qquad \hbox{for all }x:=(x_{d}^+,x_{d}^-,x_t)\in {\mathcal H}=\mathcal{H}_d^+\oplus \mathcal{H}_d^-\oplus \mathcal{H}_t
\end{equation}
for some $\omega\ge 0$. Then the operator $A_{P,L}$ defined in \eqref{APL} is quasi-$m$-dissipative, and in fact $m$-dissipative whenever $-L$ is dissipative. Accordingly, the operator $A_{P,L}$ is the infinitesimal generator of a quasi-contractive (contractive, if $-L$ is dissipative) semigroup $(e^{tA_{P,L}})_{t\ge 0}$ on $L^2(\mE)$.
If in addition $P$ and $L$ have only real entries, then the semigroup generated by $A_{P,L}$ is real.
\end{theo}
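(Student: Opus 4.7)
The plan is to apply the Lumer--Phillips theorem, i.e., to establish (i) that $A_{P,L}-\omega' \Id$ is dissipative for a suitable $\omega'\ge 0$ (with $\omega'=0$ if $-L$ is dissipative), and (ii) that $\lambda-A_{P,L}$ is surjective for some (hence all) $\lambda>\omega'$. Closedness of $A_{P,L}$ will follow from the observation that the boundary conditions \eqref{bcH} are continuous with respect to the $H^2(\mE_d)\oplus H^1(\mE_t)$-norm, so $D(A_{P,L})$ is closed in the graph norm.

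For the dissipativity step, I would start from the identity \eqref{forma2}. Since $P\underline{u}=0$ on the domain, the pairing $\langle \underline{\underline{u}},\underline{u}\rangle_{\mathcal{H}}$ reduces to $\langle P^\perp\underline{\underline{u}},\underline{u}\rangle_{\mathcal{H}}=-\langle L\underline{u},\underline{u}\rangle_{\mathcal{H}}$ by \eqref{bcH}, yielding
\[
\Real\langle A_{P,L}u,u\rangle = -\|u'_d\|_{L^2(\mE_d)}^2 - \Real\langle L\underline{u},\underline{u}\rangle_{\mathcal{H}}.
\]
Applying the hypothesis \eqref{eq:maincond} and noting that only the $\mE_d$-components of $\underline{u}$ enter the bound, I obtain
\[
\Real\langle A_{P,L}u,u\rangle \leq -\|u'_d\|_{L^2(\mE_d)}^2 + \omega\bigl(|\underline{u}_d^+|^2+|\underline{u}_d^-|^2\bigr).
\]
The trace estimate \eqref{traceineq} combined with Young's inequality gives, for every $\varepsilon>0$, a constant $C_\varepsilon>0$ with $|\underline{u}_d^+|^2+|\underline{u}_d^-|^2\leq \varepsilon\|u'_d\|_{L^2(\mE_d)}^2+C_\varepsilon\|u_d\|_{L^2(\mE_d)}^2$. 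Choosing $\varepsilon<1/\omega$ absorbs the derivative term and produces $\Real\langle A_{P,L}u,u\rangle\leq \omega'\|u\|^2_{L^2(\mE)}$ for some $\omega'\geq 0$; if $-L$ is dissipative, $\omega=0$ and no absorption is needed, giving $\Real\langle A_{P,L}u,u\rangle\leq 0$ outright.

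For the range condition I would reduce to a finite-dimensional problem. Given $f\in L^2(\mE)$ and $\lambda>\omega'$, the solutions of the componentwise equations $\lambda u_d-u_d''=f_d$ and $\lambda u_t+u_t'=f_t$ form an affine family parameterised by the boundary data $\underline{u}\in\mathcal{H}$: on each diffusion edge both endpoint values are free, while on each transport edge the value at one endpoint is free and the other endpoint value is an affine-linear function of it and of $f_t$. The map $\underline{u}\mapsto\underline{\underline{u}}$ induced by these ODE solutions is an affine Dirichlet-to-Neumann-type map on $\mathcal{H}$. Imposing the boundary conditions \eqref{bcH} gives a square linear system on $\mathcal{H}$ of the same dimension $2|\mE_d|+|\mE_t|$. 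Injectivity is immediate from dissipativity: if $u\in\ker(\lambda-A_{P,L})$ then $0\leq(\lambda-\omega')\|u\|^2\leq -\Real\langle(\lambda-A_{P,L})u,u\rangle=0$, hence $u=0$. In finite dimensions injective implies surjective, so $\lambda-A_{P,L}$ is onto. The main technical subtlety here is to verify that the Dirichlet-to-Neumann reduction is genuinely affine in $\underline{u}$ with the correct dimension count, so that the square-system argument applies.

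For the real semigroup assertion, I would note that when $P$ and $L$ are real, the resolvent equation $(\lambda-A_{P,L})u=f$ with real $f$ and $\lambda>\omega'$ admits the unique solution $u$, but $\overline{u}$ also solves it; by uniqueness $u=\overline{u}$, so $(\lambda-A_{P,L})^{-1}$ preserves $L^2_{\mathbb R}(\mE)$, and hence so does $e^{tA_{P,L}}$ via the Hille--Yosida exponential formula. Of these steps, I expect the range condition to be the main obstacle: verifying the dimension count and the affine structure of the Dirichlet-to-Neumann correspondence has to be done carefully, particularly because of the mixing of first- and second-order parts and the non-standard decomposition \eqref{Randwerte} of the transport boundary data into symmetric/antisymmetric combinations.
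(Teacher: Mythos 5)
Your proof is correct, but it departs from the paper's argument at the range condition, which is exactly the step the paper handles differently. For dissipativity you and the paper do the same thing: insert the boundary conditions into \eqref{forma2}, apply \eqref{eq:maincond}, and absorb the boundary terms via the trace/Gagliardo--Nirenberg estimate and Young's inequality (this is lemma~\ref{prodiss}; note a harmless sign slip in your injectivity display, where $(\lambda-\omega')\|u\|^2\le \Real\langle(\lambda-A_{P,L})u,u\rangle$ is the inequality you actually use). For the range condition, the paper does \emph{not} solve the ODEs: it computes the adjoint $A_{P,L}^*$ explicitly (lemma~\ref{adjoint}), using the boundary symplectic identity \eqref{boundaryform} and the duality ${\Ker}(-P^\perp,L+P)^\perp={\Ker}(L^*+P,P^\perp)$, shows that $A_{P,L}^*$ is quasi-dissipative by the same computation applied to $B$, and invokes the criterion that a closed, densely defined operator is quasi-$m$-dissipative if it and its adjoint are both quasi-dissipative; closedness is checked via boundedness of the trace maps $u\mapsto\underline u$, $u\mapsto\underline{\underline u}$. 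Your route instead reduces $(\lambda-A_{P,L})u=f$ to a square affine system of size $2|\mE_d|+|\mE_t|$ on $\mathcal H$ and gets surjectivity from injectivity; this is sound (for real $\lambda>0$ the edgewise Dirichlet and initial-value problems are uniquely solvable, and the correspondence between $\ker(\lambda-A_{P,L})$ and the homogeneous system is bijective), and in fact it anticipates the secular matrix $Z_{P,L}(k)$ and the resolvent construction of proposition~\ref{resolvent}. The trade-off: the paper's adjoint computation avoids the dimension-count bookkeeping you flag as delicate and yields $A_{P,L}^*$ explicitly, which is reused later; your construction is more elementary and produces the resolvent as a by-product. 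Your reality argument (real resolvent for real $\lambda>\omega'$ by uniqueness, then the exponential formula) is cleaner than the paper's, which goes through the explicit kernel and the inverse Laplace transform \eqref{laplacetrans}. Do remember to state that $D(A_{P,L})$ is dense (it contains $H^2_0(\mE_d)\oplus H^1_0(\mE_t)$), which both Lumer--Phillips and the paper's criterion require.
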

Clearly,~\eqref{eq:maincond} is satisfied whenever $-L$ is dissipative -- but not only, as the examples in subsections~\ref{subs:dendro-2} and~\ref{subs:dendro-3} show.

The above theorem yields in particular that the initial value problem
\begin{eqnarray*}
\left\{ \begin{array}{rcll}
         \dfrac{\partial u}{\partial t}(t)&=& A_{P,L}  u(t), \quad & t\geq 0, \\
      \  u(0) &=& u_0\in L^2(\mE), \end{array} \right.  
\end{eqnarray*}
is well posed whenever $-L$ is dissipative or even satisfies \eqref{eq:maincond}, and the solution is given by 
\[
u(\cdot,t):=e^{tA_{P,L}}u_0,\qquad t\ge 0.
\]
Furthermore, the question whether the solution $u(\cdot,t)$ is a real-valued function for all $t>0$ if the initial data $u_0$ is real-valued can be answered in terms of the boundary conditions imposed at the vertices.

\begin{rem}\label{rem:decoupl}
We call the boundary conditions~\eqref{bcH} \emph{type-decoupling} if
\begin{itemize}
\item $P$ is an orthogonal projection of $\mathcal H$ onto $\{0\}$, $\mathcal H$, ${\mathbb C}^{2|\mE_d|}\times\{0\}$, or $\{0\}\times {\mathbb C}^{|\mE_t|}$, and additionally
\item $L$ is a block-diagonal matrix with respect to the decomposition of $\mathcal H$ into ${\mathbb C}^{2|\mE_d|}$ and ${\mathbb C}^{|\mE_t|}$.
\end{itemize}
In other words, the boundary conditions are type-decoupling if actually no interaction between the boundary values in $L^2(\mE_d)$ and $L^2(\mE_t)$ takes place: this is of course the most trivial case, because the dynamics of the system can be effectively reduced to that of two distinct, non-interacting systems -- a diffusive one and a transport one.

One sees that the semigroup generated by $A_{P,L}$ is not irreducible if the boundary conditions~\eqref{bcH} are type-decoupling. In particular, for type decoupling boundary conditions theorem~\ref{mth} reproduces the classical results known for the diffusion equation and the transport equation, which have so far been studied separately, as recalled in the introduction. 
\end{rem}

We conclude by stating a conjecture on the time-dependent behavior of solutions to the initial value problem considered here.

\begin{conj}
Let $\mE_t\not=\emptyset$. If a function $f$ is supported in $\me\in\mE_t$, then the semigroup generated by $A_{P,L}$ will shift its profile until the support of $e^{tA_{P,L}}f$ hits an endpoint of $\me$. Because $e^{tA_{P,L}}f$ has in this lapse of time the same profile of $f$, the semigroup cannot be immediately smoothing. In particular, it cannot be analytic -- in fact, not even immediately differentiable. On the other hand, it seems reasonable to imagine that the semigroup smoothens the profile of a function as soon as it reaches an edge in $\mE_d$. We conjecture that if $\mE_d\not=\emptyset$, then the semigroup is differentiable for all $t>T$, i.e. $(T,\infty)\ni t\mapsto T(t)x\in X$ is differentiable for all $x\in X$, where $T$ is the length of the longest path inside $\mE_t$ (to compute taking into account the possibly coupled boundary conditions).
\end{conj}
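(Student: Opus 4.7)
The statement has two distinct parts: first, a (non-conjectural) claim of immediate non-differentiability coming from the transport edges, and second, the genuine conjecture of eventual differentiability for $t>T$. The first part is quite accessible. Pick an edge $\me\in\mE_t$ of length $a$, fix $\delta>0$ small, and take $f\in L^2(\mE)$ supported in the subinterval $[\delta,a-\delta]\subset\me$, with $f\notin D(A_{P,L})$ (for instance, $f$ merely $L^2$ with a jump). For $t\in(0,\delta)$ the solution of the pure transport equation $\dot v=-v'$ on $\me$ with any reasonable boundary prescription is the translate $v(t,x)=f(x-t)$, which still has support in $\me$ and is disjoint from the endpoints. By finite speed of propagation in the transport component, no boundary coupling can yet influence this piece of solution. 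Hence $e^{tA_{P,L}}f$ coincides (on $\me$) with $f(\cdot-t)$ for $t\in(0,\delta)$, and in particular does not enter $D(A_{P,L})$; so $t\mapsto e^{tA_{P,L}}f$ is not differentiable at such $t$, ruling out both analyticity and immediate differentiability.

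For the conjectured eventual differentiability, my plan is to go through the spectral characterization of eventually differentiable semigroups (Pazy): a $C_0$-semigroup $(T(t))_{t\ge0}$ on $X$ is differentiable for $t>t_0$ if and only if there exist $a\in\R$, $b,c>0$ such that the resolvent set of the generator contains the logarithmic region $\{\lambda:\Real\lambda\ge a-c\log|\Ima\lambda|\}$ and $\|R(\lambda,A)\|\le b|\Ima\lambda|$ there, together with an improved sectorial bound at infinity; the minimal such $t_0$ is then controlled by the constant $c$. Thus the strategy is to exploit the explicit secular equation and resolvent representation derived in Section~\ref{sec:spectr} and read off asymptotic resolvent estimates directly. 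The diffusive edges contribute sectorial resolvent bounds of standard heat-semigroup type, while the transport edges contribute factors of the form $e^{-\lambda a_{tj}}$ (arising from the characteristic $e^{-\lambda x}$ solving $-u'_t=\lambda u_t$). Combining these factors across the concatenation of transport edges in a path, one should obtain exponential factors $e^{-\lambda T}$ in the transport block of the inverse of the secular matrix, where $T$ is the longest path in $\mE_t$; this is precisely the kind of input that, via the Paley--Wiener-type representation, yields differentiability exactly beyond time $T$.

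As an alternative (and as a sanity check for the value of the threshold $T$), I would set up a Duhamel/characteristics argument: decompose $u=(u_d,u_t)$ and write $u_t$ via the method of characteristics on each transport edge, treating the boundary couplings as inhomogeneous data feeding $u_d$ through the boundary conditions~\eqref{bcH}. Iterating this representation along the directed path structure of $\mE_t$, the contribution of $f_t|_{\mE_t}$ to $u(t,\cdot)$ is a sum of translates that have vanished once each characteristic has reached a vertex absorbing into $\mE_d$; after time $T$, only the smoothened diffusive part survives, and the analytic semigroup on $L^2(\mE_d)$ (obtained as in Section~\ref{proofs} when restricted to the diffusive block) provides the differentiability of $u(t,\cdot)$ with values in $D(A_{P,L})$.

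The main obstacle, and the reason the statement is still only a conjecture, is twofold. First, the definition of the relevant quantity $T$ must incorporate the coupling at the vertices: a transport edge can feed into another transport edge via the matrix $L$, so ``longest path inside $\mE_t$'' has to be interpreted as the longest directed path in the auxiliary graph whose arrows follow the nontrivial entries of $L$ mapping $\mathcal H_t$ into itself, and one has to rule out transport--transport cycles (which, as on a self-loop in $\mE_t$, genuinely obstruct eventual differentiability and would force the conjecture to fail). Second, the resolvent estimate on the conjectured logarithmic region is delicate: the secular determinant is a quasi-polynomial in $e^{-\lambda a_{tj}}$ and $\sinh(\sqrt{\lambda}a_{di}),\cosh(\sqrt{\lambda}a_{di})$, and one has to prove that its zeros stay away from such a region uniformly in $P$ and $L$ satisfying~\eqref{eq:maincond}, which is where genuinely new input beyond the arguments of Section~\ref{proofs} would be needed.
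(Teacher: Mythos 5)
You should first note a mismatch of genre: this statement is a \emph{conjecture} in the paper, and the authors supply no proof at all -- only the informal finite-speed-of-propagation heuristic, which your first paragraph makes essentially rigorous. That part of your argument is sound: for $f\in L^2(\mE)$ with a jump, supported in $[\delta,a-\delta]$ of a transport edge $\me\in\mE_t$, the boundary coupling enters $\me$ only through the inflow endpoint and propagates along characteristics at unit speed, so $e^{tA_{P,L}}f$ restricted to $\me$ coincides with $f(\cdot-t)$ for $t<\delta$; since the translate fails to be in $H^1$ on $\me$, it is not in $D(A_{P,L})$, and differentiability of the orbit at $t$ would force $e^{tA_{P,L}}f\in D(A_{P,L})$. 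The only step you should spell out is the uniqueness argument identifying the semigroup solution with the translate on the region of the edge not yet reached by boundary influence (a standard weak-solution/characteristics argument, but it is the load-bearing step).

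For the conjectural half, what you give is a program, not a proof, and the two gaps you flag are real but there is a third you understate. First, the Pazy-type criterion for eventual differentiability is not an exact characterization with matching constants: the sufficient condition (resolvent set containing a logarithmic region $\{\lambda:\Real\lambda\ge a-c\log|\Imaa\lambda|\}$ with $\|R(\lambda,A_{P,L})\|\le b|\Imaa\lambda|$ there) yields differentiability only for $t$ beyond a multiple of $1/c$, so it cannot by itself pin the threshold at exactly $T$; you would need sharp asymptotics locating the zeros of the quasi-polynomial $\det Z_{P,L}(k)$ along curves of the form $\Real\lambda\approx -\tfrac{\rm const}{T}\log|\Imaa\lambda|$, together with resolvent bounds between them, and nothing in the paper's section~\ref{sec:spectr} (which only gives the secular equation and the kernel formula of proposition~\ref{resolvent}) supplies this. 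Second, your Duhamel/characteristics alternative contains a circularity: for $t>T$ the transport edges are \emph{not} carrying only flushed initial data -- through the conditions~\eqref{bcH} they continually receive inflow given by boundary traces of $u_d$ (and possibly of other transport components), so the claim that ``only the smoothened diffusive part survives'' is false as stated; one needs a bootstrap showing these traces gain time-regularity at each passage through the coupling, and with a nonlocal $L$ mixing $\mathcal{H}_d^\pm$ and $\mathcal{H}_t$ that bootstrap is precisely the open point. Your observation about transport--transport cycles is correct and consistent with the paper's parenthetical ``taking into account the possibly coupled boundary conditions'': with a coupled cycle inside $\mE_t$ the longest path is infinite and the conjectured threshold degenerates, as a decoupled transport loop (which generates a periodic, nowhere eventually differentiable shift group) shows it must. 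In short: part one is a correct proof of the non-conjectural assertion; part two correctly identifies the right tools but leaves the conjecture as open as the paper does.
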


\subsection{Analysis of the motivating example}\label{subs:dendro-2}

Let us now discuss our motivating problem ${\rm (BD')}$ in the general framework we have just introduced. Observe that ${\rm (BD')}$ can be seen as an abstract Cauchy problem on $L^2(\mE)$ -- with $|\mE_d|=2$ and $|\mE_t|=1$ -- and the boundary conditions can be written as in~\eqref{bcH}, with
\begin{eqnarray*}
L:= \begin{bmatrix} 1 & 0 & 0 & 0& 0 \\
        0 & 0 & 0 & 0 & 0 \\
        0 & 0 & 0 & 0 & 0 \\
         1 & 0 & 0 & 0 & - \sqrt{2} \\
         - \sqrt{2} & 0 & 0 & 0 & 1 
\end{bmatrix} & \mbox{and} & P := 0.
\end{eqnarray*} 
Since \begin{eqnarray*}
\Real L= \frac{1}{2}(L+L^*)&=&\frac{1}{2}\begin{bmatrix} 2 & 0 & 0 & 1&  - \sqrt{2} \\
        0 & 0 & 0 & 0 & 0 \\
        0 & 0 & 0 & 0 & 0 \\
         1 & 0 & 0 & 0 & - \sqrt{2} \\
         - \sqrt{2} & 0 & 0 & - \sqrt{2} & 2 
\end{bmatrix} 
\end{eqnarray*} 
is not sign definite, $-L$ is not dissipative. However, one has 
\begin{eqnarray*}
-\langle\Real L x,x\rangle = - \abs{x_{d1}^+}^2 -\abs{x_t}^2 - \frac{1}{2}\left\langle  
\begin{bmatrix} 0 &  1\\
         1 &  0  
\end{bmatrix}  \begin{bmatrix} x_{d1}^+ \\ x_{d2}^- \end{bmatrix}, \begin{bmatrix} x_{d1}^+ \\ x_{d2}^- \end{bmatrix}\right\rangle
-
\frac{1}{2}\left\langle  
\begin{bmatrix} 0 &  0&  - \sqrt{2} \\
         0 &  0 & - \sqrt{2} \\
         - \sqrt{2} &  - \sqrt{2} & 0 
\end{bmatrix}  \begin{bmatrix} x_{d1}^+ \\ x_{d2}^- \\ x_t \end{bmatrix}, \begin{bmatrix} x_{d1}^+ \\ x_{d2}^- \\ x_t  \end{bmatrix}\right\rangle.
\end{eqnarray*}  
Decomposing the matrix as above is critic, as this allows to find norms
\begin{eqnarray*}
\omega_1 :=\Norm{ 
\frac{1}{2}\begin{bmatrix}
0 &  1 \\
1 &  0 
\end{bmatrix}} =\frac{1}{2} & \mbox{and} &
\omega_2 :=\Norm{ 
\frac{1}{2}\begin{bmatrix}
0 &  0&  - \sqrt{2} \\
0 &  0 & - \sqrt{2} \\
- \sqrt{2} &  - \sqrt{2} & 0 
\end{bmatrix}} =1,
\end{eqnarray*}
and subsequently to estimate
\begin{eqnarray*}
-\langle\Real L x,x\rangle \leq   \left(\omega_1+\omega_2-1\right)\abs{x_{d1}^+}^2 +\left(\omega_1+\omega_2\right)\abs{x_{d2}^-}^2+
(\omega_2-1)\abs{x_t}^2,
\end{eqnarray*} 
whence
\begin{eqnarray*}
-\langle\Real L x,x\rangle \leq  \frac{3}{2} \left(\abs{x_{d}^+}^2 +\abs{x_{d}^-}^2\right).
\end{eqnarray*} 
This shows that \eqref{eq:maincond} is satisfied and a direct application of theorem~\ref{mth} yields the following.
\begin{prop}
The initial-boundary value problem $\rm(BD')$ is governed by a strongly continuous semigroup $(e^{tA_{P,L}})_{t\ge 0}$ on $L^2(\mE)=L^2(0,1)\times L^2(0,1)\times L^2(0,1)$. Furthermore, this semigroup is real.
\end{prop}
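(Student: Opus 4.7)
The proof is a direct application of Theorem~\ref{mth}. The plan has three steps: first, recognise $({\rm BD}')$ as the abstract Cauchy problem associated with $A_{P,L}$ for an appropriate $P$ and $L$; second, verify condition~\eqref{eq:maincond}; third, read off generation and realness from the theorem.

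For the first step, I take $|\mE_d|=2$ (corresponding to $\me_1,\me_2$) and $|\mE_t|=1$ (corresponding to $\me_{\rm del}$), so that $\mathcal{H}=\mathbb{C}^2\oplus\mathbb{C}^2\oplus\mathbb{C}$. With $P=0$ the boundary system collapses to $\underline{\underline{u}}+L\underline{u}=0$. Writing out each of the five scalar conditions of $({\rm BD}')$ — the Robin-type condition $u'_1(1)=u_1(1)$, the two transmission conditions at the endpoints of $\me_{\rm del}$, and the two sealed-end conditions — in the basis prescribed by~\eqref{Randwerte} produces exactly the matrix $L$ displayed in the excerpt; the factors $2^{-1/2}$ in the definitions of $\underline{u}$ and $\underline{\underline{u}}$ account for the $\pm\sqrt{2}$ entries in the fourth and fifth rows. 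This is routine but sign-sensitive bookkeeping.

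For the second step, the computation has already been carried out in the excerpt: one decomposes $\Real L$ into a diagonal piece (contributing $-|x_{d1}^+|^2-|x_t|^2$) plus a $2\times 2$ off-diagonal block of operator norm $\tfrac12$ and a $3\times 3$ off-diagonal block of operator norm $1$, estimates each block by its norm, and collects terms to obtain $-\langle(\Real L)x,x\rangle\le\tfrac32(|x_d^+|^2+|x_d^-|^2)$, which is~\eqref{eq:maincond} with $\omega=\tfrac32$. Theorem~\ref{mth} then immediately delivers a quasi-contractive, hence strongly continuous, semigroup on $L^2(\mE)=L^2(0,1)\times L^2(0,1)\times L^2(0,1)$; because every entry of $L$ and $P=0$ is real, the last assertion of Theorem~\ref{mth} yields that this semigroup is real.

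The only conceptual point — already neutralised by the decomposition argument — is the non-dissipativity of $-L$, which stems from the non-standard sign in the Robin condition $u'_1(1)=u_1(1)$ and forbids a naive appeal to the $m$-dissipative half of Theorem~\ref{mth}. This is exactly where the weaker condition~\eqref{eq:maincond} earns its keep: it only constrains the diffusive components $x_d^\pm$ and leaves enough slack to absorb the off-diagonal coupling with the transport component $x_t$.
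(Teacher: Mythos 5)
Your proposal is correct and follows essentially the same route as the paper: the same identification of $({\rm BD}')$ with $A_{P,L}$ for $P=0$ and the displayed $L$, the same decomposition of $\Real L$ into a diagonal part plus two off-diagonal blocks of norms $\tfrac12$ and $1$ yielding \eqref{eq:maincond} with $\omega=\tfrac32$, and the same appeal to Theorem~\ref{mth} for generation and realness. Nothing further is needed.
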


\begin{rem}
Observe that the above result does not really depend on our choice to consider a delay interval $\me_{\rm del}$ of unit length: we may in fact replace $(0,1)$ by an interval of arbitrary finite length.  The same holds for the lengths of the diffusion edges. Moreover, $-L$ is not dissipative; hence theorem~\ref{mth} cannot be applied to deduce contractivity of the semigroup that governs the problem.
\end{rem}

\subsection{Another example}\label{subs:dendro-3}
Observe that dissipativity of the matrix $-L$ is sufficient but not necessary for $A_{P,L}$ to be $m$-dissipative. To give a concrete example of an $m$-dissipative operator $A_{P,L}$ where $-L$ is \emph{not} dissipative, we consider the graph consisting of one transport and one diffusion edge with certain lengths $a_{d1}$ and $a_{t1}$, 
\begin{center}
\begin{tikzpicture}
\node(pseudo) at (-1,0){};
\node(0) at (0,0)[shape=circle,draw] {};
\node(1) at (2,0)[shape=circle,draw] {};
\node(2) at (4,0)[shape=circle,draw] {};
\draw [->] (2.2,0) -- (3,0);
\path [-]
  (0)      edge                 node [above]  {$\me_{d1}$}     (1)
  (1)      edge                 node [above]  {$\me_{t1}$}     (2);
\end{tikzpicture}
\end{center}
and boundary conditions as in~\eqref{bcH} by taking
\begin{eqnarray*}
P:=\begin{bmatrix} 1 & 0 & 0 \\ 0 & 0 & 0 \\ 0 & 0 & 0  \end{bmatrix} & \mbox{and} & L_{\alpha}:=\begin{bmatrix} 0 & 0 & 0 \\ 0 & 0 & 0 \\ 0 & - \sqrt{2}\alpha & 1  \end{bmatrix} , \quad \alpha >0.
\end{eqnarray*}
This corresponds to the boundary conditions
\begin{eqnarray*}
u_{d}(a_{d1})=0, \qquad u^{\prime}_{d}(0)=0, \quad \mbox{and } \quad \alpha\, u_d(0)= u_t(0).
\end{eqnarray*}
Inserting this into \eqref{forma} delivers
\begin{eqnarray*}
\Real \langle A_{P,L_{\alpha}}u,u\rangle= -\int_{\mE_d} \abs{u_d'}^2 - \frac{1}{2}\abs{u_t(a_{t1})}^2 + \frac{\alpha^2}{2}\abs{u_{d}(0)}^2, & u\in D(A_{P,L_{\alpha}}).
\end{eqnarray*}
Note that the form defined by
\begin{eqnarray*}
-\int_{\mE_d} \abs{u_d'}^2 + \frac{\alpha^2}{2}\abs{u_{d}(0)}^2, & u \in \left\{H^1(\mE_d) \mid u_d(a)=0 \right\}  
\end{eqnarray*}
is dissipative for $\alpha>0$ small enough. Since for $\left\{H^1(\mE_d) \mid u_d(a)=0 \right\}$ a Poincar\'{e} type inequality 
$\norm{u}_{L^2(\mE_d)}\leq C \norm{u_d^{\prime}}_{L^2(\mE_d)}$ holds for a constant $C>0$, one obtains from \eqref{gagliardo}
$$\abs{u_{d}(0)}^2 \leq C^{\prime}\norm{u_d^{\prime}}^2, $$
for a constant $C^{\prime}>0$, and hence 
\begin{eqnarray*}
\Real \langle A_{P,L_{\alpha}}u,u\rangle\leq 0, & \mbox{for } \left(\frac{\alpha^2}{2}C^{\prime}-1 \right)\leq 0,
\end{eqnarray*}
i.e. $A_{P,L_{\alpha}}$ is dissipative even if the operator $-L_{\alpha}$ is not. Since $L_{\alpha}$ satisfies \eqref{eq:maincond} for some $\omega>0$ for $\alpha>0$ small enough one has by theorem~\ref{mth} that the operator $A_{P,L_{\alpha}}$ is $m$-dissipative for $\alpha>0$ small.

\subsection{A different model}

Instead of considering the transport-diffusion-type Cauchy problem associated with dissipative operators, one might think of a Schr\"odinger-type Cauchy problem involving self-adjoint operators with mixed dynamics. To this aim, consider the symmetric operator  
\begin{eqnarray*}
S^0:=\begin{bmatrix}
\frac{d^2}{dx^2} & 0 \\ 0 & i\frac{d}{dx} 
\end{bmatrix},\qquad D(S^0):= H_0^2(\mE_d)\oplus H^1_0(\mE_t).
\end{eqnarray*}
It has equal deficiency indices $(2 \abs{\mE_d}+\abs{\mE_t}, 2 \abs{\mE_d}+\abs{\mE_t})$ and its adjoint $S=(S^0)^{\ast}$ is formally the same operator with domain $H^2(\mE_d)\oplus H^1(\mE_t)$. Hence there exists self-adjoint extensions and these can be parametrized in terms of boundary conditions. One defines the appropriately modified vectors of boundary values by
\begin{eqnarray*}
\bar{u}:=\begin{bmatrix} 
\{u_{di}(a_{di})\}_{1\le i\le D} \\ 
\{u_{di}(0)\}_{1\le i\le D} \\
2^{-\frac12}\{u_{tj}(a_{tj})+u_{tj}(0)\}_{1\le j\le T}
\end{bmatrix}
&\mbox{and} & 
\bar{\bar{u}}:=\begin{bmatrix} 
\{u_{di}'(a_{di})\}_{1\le i\le D} \\ 
\{-u_{di}'(0)\}_{1\le i\le D} \\
 i \cdot 2^{-\frac12}{\{-u_{tj}(a_{tj})+u_{tj}(0)\}_{1\le j\le T}}
 \end{bmatrix}
\end{eqnarray*}
to obtain the well known Hermite symplectic form on the space of boundary values
\begin{eqnarray*}
\langle S u,v \rangle -\langle u,S v \rangle 
= \left\langle 
\begin{bmatrix} \bar{u}\\ \bar{\bar{u}} \end{bmatrix}, \begin{bmatrix} 0 & -\mathds{1}_{\mathcal H} 
\\ \mathds{1}_{\mathcal H} 
& 0 \end{bmatrix} \begin{bmatrix} \bar{v}\\ \bar{\bar{v}} \end{bmatrix} \right\rangle_{\mathcal{H}^2}, 
& & u,v\in H^2(\mE_d)\oplus H^1(\mE_t). 
\end{eqnarray*}
It is known that there is a one-to-one correspondence between the self-adjoint extensions of symmetric operators and the maximal isotropic subspaces with respect to this Hermite symplectic form, see e.g. \cite{Harmer2000}. A unique parametrization of such a subspace is given in terms of a projection $P$ and a Hermitian operator $L$ acting in ${\Ker}P$. Therefore, any self-adjoint realization $S_{P,L}$ of $S^0$ is a restriction of $S$ to a domain of the type
\begin{eqnarray*}
D(S_{P,L})=\{u\in H^2(\mE_d)\oplus H^1(\mE_t) \mid P\bar{u}=0 \ \mbox{and} \ L\bar{u}+P^\perp\bar{\bar{u}}=0  \}.
\end{eqnarray*}  

The spectral theory for the operators $S_{P,L}$ can be developed on the lines of the spectral theory for the operators $A_{P,L}$ elaborated in section~\ref{sec:spectr}. In particular, it follows like in the proof of lemma~\ref{rescomp} below that the resolvents of the self-adjoint operators $S_{P,L}$ are compact, and hence their spectrum is purely discrete. Furthermore, one can obtain an explicit expression for the resolvents like that in the forthcoming proposition~\ref{resolvent}. 

The self-adjoint operators $S_{P,L}$ can be interpreted as Hamiltonians consisting of a standard Laplacian and a less usual moment-type observable. As already mentioned, moment operators on graphs have been recently studied in~\cite{Exn12}. By Stone's theorem, for self-adjoint $S_{P,L}$ the abstract Cauchy problem
\begin{eqnarray*}
\left\{ \begin{array}{rcll}
        i \frac{\partial u}{\partial t}(t)&=& S_{P,L}  u(t), \quad & t\geq 0, \\
      \  u(0) &=& u_0\in L^2(\mE), \end{array} \right.  
\end{eqnarray*}
is governed by a unitary group.

\section{Proofs of the main results}\label{proofs}

Before proving theorem~\ref{mth}, we need two preparatory lemmata.

\begin{lemma}\label{prodiss}
Let $\omega \ge 0$ be such that~\eqref{eq:maincond} is satisfied. Then $A_{P,L}$ is quasi-dissipative: more precisely,
\begin{equation}\label{omegatilde}
\Real \langle A_{P,L} u,u\rangle \le \tilde{\omega}\|u\|_{L^2(\mE)}^2:=\left(\frac{\omega^2 C^2}{4}+1\right)\|u\|_{L^2(\mE)}^2\qquad \hbox{for all $u\in D(A_{P,L})$},
\end{equation}
where $C>0$ is a constant, depending only on the total length of the graph, such that
\begin{equation}\label{gagliardo2}
\|(u_d(a),u_d(0)) \|^2\le C\|u\|_{L^2(\mE_d)} \|u\|_{H^1(\mE_d)}\qquad \hbox{for all } u\in H^1(\mE_d).
\end{equation}
If in particular $-L$ is dissipative, then $A_{P,L}$ is dissipative.
\end{lemma}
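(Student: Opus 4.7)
The strategy is to start from the identity already derived at the end of Section~\ref{sec:opermetr},
\[
\Real \langle A_{P,L} u,u\rangle= -\|u_d'\|_{L^2(\mE_d)}^2- \Real \langle L\underline{u},\underline{u}\rangle_{\mathcal{H}}, \qquad u\in D(A_{P,L}),
\]
which uses that any $u\in D(A_{P,L})$ satisfies $P\underline{u}=0$ (so $\underline{u}\in\Ker P$, where $L$ lives), and then bound the boundary form using~\eqref{eq:maincond} together with a trace-type estimate that produces the constant $C$ in~\eqref{gagliardo2}.

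First I would justify the existence of the constant $C$ in~\eqref{gagliardo2} by applying the scalar inequality~\eqref{traceineq} of Lemma~\ref{lem:sch} (suitably rescaled to each edge length $a_{di}$) at $y=0$ and $y=a_{di}$ and summing over $i=1,\dots,D$; the resulting constant depends only on the lengths $a_{d1},\ldots,a_{dD}$.

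Next, using the explicit form of $\underline{u}$ in~\eqref{Randwerte}, I identify $\underline{u}_d^+=(u_{di}(a_{di}))_{i=1}^D$ and $\underline{u}_d^-=(u_{di}(0))_{i=1}^D$, so that $|\underline{u}_d^+|^2+|\underline{u}_d^-|^2=\|(u_d(a),u_d(0))\|^2$. Applying~\eqref{eq:maincond} to $x=\underline{u}$ and then~\eqref{gagliardo2} yields
\[
-\Real\langle L\underline{u},\underline{u}\rangle_{\mathcal{H}}\le \omega\|(u_d(a),u_d(0))\|^2\le \omega C\,\|u\|_{L^2(\mE_d)}\|u\|_{H^1(\mE_d)}.
\]

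The key (and only slightly nontrivial) step is to absorb the $\|u_d'\|^2$ term. I would apply Young's inequality $\alpha\beta\le\tfrac14\alpha^2+\beta^2$ with $\alpha=\omega C\|u\|_{L^2(\mE_d)}$ and $\beta=\|u\|_{H^1(\mE_d)}=\bigl(\|u_d'\|_{L^2(\mE_d)}^2+\|u\|_{L^2(\mE_d)}^2\bigr)^{1/2}$, obtaining
\[
\omega C\|u\|_{L^2(\mE_d)}\|u\|_{H^1(\mE_d)}\le \tfrac{\omega^2C^2}{4}\|u\|_{L^2(\mE_d)}^2+\|u_d'\|_{L^2(\mE_d)}^2+\|u\|_{L^2(\mE_d)}^2.
\]
Inserting this into the identity for $\Real\langle A_{P,L}u,u\rangle$, the $\|u_d'\|^2$ contributions cancel exactly, leaving
\[
\Real\langle A_{P,L}u,u\rangle\le \left(\tfrac{\omega^2C^2}{4}+1\right)\|u\|_{L^2(\mE_d)}^2\le \tilde\omega\|u\|_{L^2(\mE)}^2,
\]
which is~\eqref{omegatilde}.

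Finally, if $-L$ is dissipative then~\eqref{eq:maincond} holds with $\omega=0$, so the boundary term already satisfies $-\Real\langle L\underline{u},\underline{u}\rangle\le 0$; no trace estimate or Young inequality is needed, and one reads off $\Real\langle A_{P,L}u,u\rangle\le -\|u_d'\|^2\le 0$ directly, giving dissipativity of $A_{P,L}$. The only real subtlety in the argument is the choice of Young's constant that makes the $\|u_d'\|^2$ coming from the trace bound exactly cancel the $-\|u_d'\|^2$ coming from integration by parts; this is what pins down the specific constant $\tfrac{\omega^2C^2}{4}+1$ in the statement.
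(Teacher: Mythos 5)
Your argument is correct and follows essentially the same route as the paper: start from the identity $\Real\langle A_{P,L}u,u\rangle=-\|u_d'\|^2_{L^2(\mE_d)}-\Real\langle L\underline{u},\underline{u}\rangle_{\mathcal H}$, invoke~\eqref{eq:maincond} and the trace estimate~\eqref{gagliardo2}, and apply Young's inequality tuned so that the $\|u_d'\|^2$ term is exactly absorbed (your choice $\alpha\beta\le\tfrac14\alpha^2+\beta^2$ is precisely the paper's choice $\varepsilon=2/(\omega C)$), with the dissipative case read off directly when $\omega=0$.
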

The estimate  \eqref{gagliardo2} follows from~\eqref{gagliardo}.

\begin{proof}
Take $u\in D(A_{P,L})$. Then it follows from~\eqref{forma2} that
\begin{eqnarray*}
-\Real \langle A_{P,L}u,u\rangle = \|u'\|^2_{L^2(\mE_d)}+\Real \langle L\underline{u},\underline{u}\rangle_{\mathcal{H}}.
\end{eqnarray*}
If $-L$ is dissipative, then we just estimate this by
\begin{eqnarray*}
-\Real \langle A_{P,L}u,u\rangle\ge \|u'\|^2_{L^2(\mE_d)}\ge 0.
\end{eqnarray*}
If instead $L$ only satisfies~\eqref{eq:maincond} for $\omega>0$, then it follows from~\eqref{forma2} that
\begin{equation}\label{eq:prodiss}
\begin{array}{rcl}
-\Real \langle A_{P,L}u,u\rangle&=& \|u'\|^2_{L^2(\mE_d)}+\Real \langle L\underline{u},\underline{u}\rangle_{\mathcal{H}}\\
&\ge& \|u'\|^2_{L^2(\mE_d)}-\omega \|(u_d(a),u_d(0)) \|^2\\
&\ge & \|u'\|^2_{L^2(\mE_d)}-\omega C\|u\|_{H^1(\mE_d)}\|u\|_{L^2(\mE_d)} \\
&\ge & \|u'\|^2_{L^2(\mE_d)}-\omega C\left(\frac{\epsilon}{2}\|u'\|_{L^2(\mE_d)}^2 + \frac{\epsilon}{2}\|u\|_{L^2(\mE_d)}^2 +\frac{1}{2\epsilon}\|u\|_{L^2(\mE_d)}^2\right).
\end{array}
\end{equation}
for all $\varepsilon>0$. In particular, for $\varepsilon=\frac{2}{\omega C}$ we obtain
\begin{eqnarray*}
-\Real \langle A_{P,L}u,u\rangle&\ge& -\left(\frac{\omega^2 C^2}{4}+1\right)\|u\|_{L^2(\mE)}^2.
\end{eqnarray*}
This concludes the proof.
\end{proof}

The restriction of $A$ to $H_0^2(\mE_d)\oplus H_0^1(\mE_t)$ is denoted by $A^0$. It is straightforward to verify that the adjoint of $A^0$ in the Hilbert space $L^2(\mE)$ is the operator 
\begin{eqnarray*}
B:=\begin{bmatrix}
\frac{d^2}{dx^2} & 0 \\ 0 & \frac{d}{dx} 
\end{bmatrix}, & &
D(B):=H^2(\mE_d)\oplus H^1(\mE_t).
\end{eqnarray*}
As $A_{P,L}\subset A$ is an extension of $B^*=A^0$, the operator $A_{P,L}^{\ast}$ is a restriction of $B$. Therefore, it can be described in terms of boundary conditions {imposed on $B$}. We introduce the notation 
\begin{eqnarray*} 
\widetilde{v}=\begin{bmatrix} 
\{v_{di}(a_i)\}_{1\le i\le D} \\ \{v_{di}(0)\}_{1\le i\le D} \\ 2^{-\frac12}\{(v_{tj}(0)+v_{tj}(a_j))\}_{1\le j\le T}
\end{bmatrix} & \mbox{and} & 
\widetilde{\widetilde{v}}=\begin{bmatrix} 
\{v_{di}^\prime(a_i)\}_{1\le i\le D} \\ \{-v_{di}^\prime(0)\}_{1\le i\le D} \\ 2^{-\frac12} \{(-v_{tj}(0)+v_{tj}(a_j))\}_{1\le j\le T}
\end{bmatrix}
\end{eqnarray*}
observing that 
\begin{align*} 
\widetilde{v}= \underline{v} \ \mbox{and} \ 
\widetilde{\widetilde{v}}= J \underline{\underline{v}},\qquad \hbox{where } & J:=\begin{bmatrix} \mathds{1}_{\mathbb C^{2|\mE_d|}} & 0\\ 0 & -\mathds{1}_{\mathbb C^{|\mE_t|}} \end{bmatrix}.
\end{align*}
(The change of sign in the last component is due to the change of the direction on the transport edges). 
\begin{lemma}\label{adjoint}
The adjoint operator of $A_{P,L}$ is given by
\begin{equation*}
\begin{array}{rcl}
D(A_{P,L}^{\ast})&=&\{(v_d, v_t) \in H^2(\mE_d)\oplus H^1(\mE_t) \mid (L^*+P)\widetilde{v}+ P^\perp \widetilde{\widetilde{v}}=0 \},\\
A_{P,L}^{\ast}u&=&Bu.
\end{array}
\end{equation*}
\end{lemma}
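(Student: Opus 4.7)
The plan is to exploit the inclusion $A^0\subset A_{P,L}$ together with the identity $(A^0)^*=B$ recorded just above the statement. Taking adjoints reverses inclusion, hence $A_{P,L}^*\subset (A^0)^*=B$. This already gives that every $v\in D(A_{P,L}^*)$ lies in $H^2(\mE_d)\oplus H^1(\mE_t)$ and that $A_{P,L}^*v=Bv$; what remains is to identify the boundary conditions that cut out $D(A_{P,L}^*)$ inside $D(B)$.

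For that, I would first record the Green's-type identity. An edge-wise integration by parts gives, for every $u,v\in H^2(\mE_d)\oplus H^1(\mE_t)$,
\[
\langle Au,v\rangle-\langle u,Bv\rangle =[u_d',v_d]_{\partial\mE_d}-[u_d,v_d']_{\partial\mE_d}-[u_t,v_t]_{\partial\mE_t}.
\]
The next, more technical step is to check that the symmetrized boundary vectors $\widetilde v,\widetilde{\widetilde v}$ (which differ from $\underline v,\underline{\underline v}$ only through the sign-flip $J$ in the transport block, as already observed in the excerpt) convert the right-hand side into the compact $\mathcal H$-pairing
\[
\langle Au,v\rangle-\langle u,Bv\rangle = \langle\underline{\underline u},\widetilde v\rangle_{\mathcal H}-\langle\underline u,\widetilde{\widetilde v}\rangle_{\mathcal H}.
\]
On the diffusion block this is the classical Green's identity; on each transport edge the $2^{-\frac12}$ normalisation together with the sign change in the last component of $\widetilde{\widetilde v}$ is precisely what is needed to diagonalise the sign-indefinite boundary expression $|u_t(a)|^2-|u_t(0)|^2$ into a single $\mathcal H$-inner product.

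With the compact identity in hand, I would impose the boundary conditions on $u$. Since $L$ acts in $\Ker P$ with extension by zero on $\Ran P$, one has $PL=LP=0$, and hence also $PL^*=L^*P=0$. Thus $P^\perp\underline{\underline u}+(L+P)\underline u=0$ is equivalent to the uncoupled pair $P\underline u=0$ and $P^\perp\underline{\underline u}=-L\underline u$, while $P\underline{\underline u}$ is left unconstrained. Substituting into the identity and using $\langle -L\underline u,P^\perp\widetilde v\rangle=-\langle\underline u,L^*\widetilde v\rangle$ gives
\[
\langle A_{P,L}u,v\rangle-\langle u,Bv\rangle=\langle P\underline{\underline u},P\widetilde v\rangle_{\mathcal H}-\langle\underline u,L^*\widetilde v+P^\perp\widetilde{\widetilde v}\rangle_{\mathcal H}.
\]
Surjectivity of the trace map $u\mapsto(\underline u,P\underline{\underline u})$ from $D(A_{P,L})$ onto $\Ker P\times\Ran P$ (a standard Sobolev consequence of being able to prescribe endpoint values and first derivatives independently on each interval) then lets me conclude that the right-hand side vanishes for every admissible $u$ if and only if $P\widetilde v=0$ and $L^*\widetilde v+P^\perp\widetilde{\widetilde v}=0$, which recombine to the single relation $(L^*+P)\widetilde v+P^\perp\widetilde{\widetilde v}=0$ stated in the lemma.

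The principal obstacle I anticipate is the sign- and normalisation-bookkeeping that promotes the edge-wise boundary terms to the clean $\mathcal H$-pairing — the transport block forces the asymmetry between $\underline{\underline v}$ and $\widetilde{\widetilde v}$ and one must verify that the $2^{-1/2}(\cdot\pm\cdot)$ combinations produce exactly the required bilinear form. Once that identity is established, the projection argument and the appeal to trace surjectivity proceed along standard lines.
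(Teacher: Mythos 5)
Your argument is correct and follows essentially the same route as the paper: the inclusion $A_{P,L}^*\subset(A^0)^*=B$, the Green's identity $\langle Au,v\rangle-\langle u,Bv\rangle=\langle\underline{\underline{u}},\widetilde v\rangle_{\mathcal H}-\langle\underline u,\widetilde{\widetilde v}\rangle_{\mathcal H}$ (including the sign-flip $J$ on the transport block), and surjectivity of the trace map from $D(A_{P,L})$. The only cosmetic difference is that you extract $(L^*+P)\widetilde v+P^\perp\widetilde{\widetilde v}=0$ by decomposing explicitly along $\Ran P\oplus\Ker P$, whereas the paper computes the orthogonal complement of $\Ker(-P^\perp,\,L+P)$ under the symplectic pairing --- the same linear algebra in different clothing.
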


\begin{proof}
By definition, the adjoint of $A_{P,L}$ is the operator given by 
\begin{eqnarray*}
D(A_{P,L}^{\ast})&:=&\{v \in L^2(\mE) \mid \exists u \in L^2(\mE) \ \mbox{s.t.} \ \langle A_{P,L}w,v\rangle= \langle w,u\rangle \ \mbox{for all} \ w\in D(A_{P,L}) \},\\
A_{P,L}^{\ast}v&:	=&u.
\end{eqnarray*} 
To begin with, observe that following the computations in~\eqref{intbp} the operators $A$ and $B$ -- without boundary conditions -- satisfy
\begin{align*}
\langle Au,v \rangle - \langle u,Bv \rangle 
=& - \langle \underline{u},\widetilde{\widetilde{v}} \rangle_{{\mathcal H}} 
+ \langle \underline{\underline{u}},\widetilde{v} \rangle_{{\mathcal H}} \end{align*}
or rather
\begin{equation}\label{boundaryform}
\langle Au,v \rangle -\langle u,Bv \rangle 
= \left\langle \begin{bmatrix} \underline{u}\\ \underline{\underline{u}} \end{bmatrix}, \begin{bmatrix} 0 & -{\mathds 1}_{\mathcal H}
 \\ {\mathds 1}_{\mathcal H} 
 & 0 \end{bmatrix} \begin{bmatrix} \underline{v}\\ J\underline{\underline{v}} \end{bmatrix} \right\rangle_{{\mathcal H}^2}, \qquad u,v\in H^2(\mE_d)\oplus  H^1(\mE_t).
\end{equation}
Recall that $A_{P,L}^{\ast}$ is a restriction of the operator $B$. Hence $v\in D(A_{P,L}^{\ast})$ if and only if the boundary term in~\eqref{boundaryform} vanishes for all $u\in D(A_{P,L})$. The range of 
\begin{eqnarray*}
[\cdot]_{P,L}: D(A_{P,L})\rightarrow {\mathcal H}^2, & \left[u\right]_{P,L}= \begin{bmatrix}\underline{u} \\ \, \underline{\underline{u}}\, \end{bmatrix} 
\end{eqnarray*}
is ${\Ker}(L+P, \, P^\perp)$ and hence the boundary term vanishes for a {fixed} $v\in H^2(\mE_d)\oplus   H^1(\mE_t)$ and all $u\in D(A_{P,L})$ if and only if {
\begin{eqnarray*}
 \begin{bmatrix}\underline{v} \\ \, J\underline{\underline{v}} \, \end{bmatrix} \perp {\Ker}(-P^\perp,\, L+P), 
\end{eqnarray*}
taking into account that
\begin{eqnarray*}
\begin{bmatrix} 0 & \mathds{1}_{\mathcal H} \\ -\mathds{1}_{\mathcal H} & 0 \end{bmatrix} {\Ker}(L+P, \, P^\perp)={\Ker}(-P^\perp,\, L+P).
\end{eqnarray*}
}
Since the orthogonal complement of the space ${\Ker}(-P^\perp,\, L+P)$ is exactly ${\Ker}(L^*+P, \, P^\perp)$, one summarizes that $v\in D(A_{P,L}^{\ast})$ if and only if $v\in H^2(\mE_d)\oplus   H^1(\mE_t)$ and
$$ \begin{bmatrix}\underline{v} \\ \, J\underline{\underline{v}} \, \end{bmatrix}=\begin{bmatrix} \widetilde{v} \\ \, \widetilde{\widetilde{v}} \, \end{bmatrix}\in {\Ker}(L^*+P, \, P^\perp).$$ This completes the proof.
\end{proof}

\begin{proof}[Proof of theorem \ref{mth}]
It is known (\cite[corollary~II.3.17]{EngNag00}) that a sufficient condition for a densely defined operator to have $m$-dissipative closure is that both the operator and its adjoint are dissipative. By lemma~\ref{prodiss}, the operator $A_{P,L}$ is quasi-dissipative for any $L$ satisfying \eqref{eq:maincond} for some $\omega\geq 0$ and dissipative whenever $-L$ is dissipative. Like in lemma~\ref{prodiss} one proves that conversely the operator $A_{P,L}^{\ast}$ is quasi-dissipative for any $L$ satisfying \eqref{eq:maincond} for some $\omega\geq 0$ and dissipative whenever $-L$ is dissipative using
\begin{equation*}
{\Real}\langle Bu,u\rangle= -\int_{\mE_d} \abs{u_d'}^2 {+ \Real} \langle \widetilde{\widetilde{u}},\widetilde{u}\rangle_{\mathcal{H}},\qquad u\in H^2(\mE_d)\oplus   H^1(\mE_t).
\end{equation*}

To conclude the proof, it suffices to check that $A_{P,L}$ is actually closed. Because both the first and the second derivative without boundary conditions are closed operators, in our case it suffices to check that the boundary conditions are respected in the limit. This follows from the fact that $u\mapsto \underline{u}$ and $u\mapsto \underline{\underline{u}}$ are bounded operators from the Hilbert space $H^2(\mE_d)\oplus H^1(\mE_t)$ to ${\mathcal H}$. 

In order to prove reality of the semigroup it is sufficient to show that the resolvent maps real function to real functions. The resolvent is calculated explicitly in proposition~\ref{resolvent} in the forthcoming section~\ref{sec:spectr} as an integral operator. Observe that its kernel $r(\cdot,\cdot,i\kappa)$, $\kappa>0$ has real coefficients whenever the operator $P$ and $L$ has real entries. Therefore, the resolvents $(A_{P,L}-\kappa^2)^{-1}$ map real-valued functions to real-valued functions for any $\kappa^2>0$. Applying the generalized inverse Laplace transform described in \eqref{laplacetrans}, one concludes that the semigroup generated by $A_{P,L}$ is real whenever $P$ and $L$ are real.  
\end{proof}

\section{Spectral theory}\label{sec:spectr}

As we have mentioned in the appendix, cf~\eqref{laplacetrans}, knowing the resolvent of $A_{P,L}$ one can describe the strongly continuous semigroup generated by it by means of the inverse Laplace transform, for $A_{P,L}$  quasi-$m$-dissipative.

In the following the spectrum of the operator $A_{P,L}$ is analyzed and an explicit formula for the resolvent is derived. Taking into account lemma~\ref{lem:sch}, we promptly obtain the following.

\begin{lemma}\label{rescomp}
For all orthogonal projectors $P$ on ${\mathcal H}$ and all linear operators $L$ acting on ${\Ker}P$ satisfying \eqref{eq:maincond} the operators $A_{P,L}$ have resolvent of $p$th Schatten class for all $p>1$ (and, in particular, of Hilbert--Schmidt class). In particular, the operators $A_{P,L}$ have only pure point spectrum.
\end{lemma}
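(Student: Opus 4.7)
The plan is to exploit the factorization of the resolvent through the maximal domain $D(A)=H^2(\mE_d)\oplus H^1(\mE_t)$, together with the Schatten class embedding supplied by Lemma~\ref{lem:sch}, and then deduce pure point spectrum from compactness by standard Riesz--Schauder theory.

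First, I would observe that by theorem~\ref{mth}, $A_{P,L}$ is quasi-$m$-dissipative, hence closed and with nonempty resolvent set: every sufficiently large real $\lambda$ lies in $\rho(A_{P,L})$. Fix such a $\lambda$. Then the resolvent $R(\lambda,A_{P,L})$ maps $L^2(\mE)$ bijectively onto the domain $D(A_{P,L})\subset H^2(\mE_d)\oplus H^1(\mE_t)=D(A)$. Because $A_{P,L}$ is closed, the graph norm on $D(A_{P,L})$ is a Banach space norm equivalent to the norm $\|\cdot\|_{L^2(\mE)}+\|A\cdot\|_{L^2(\mE)}$ inherited from $D(A)$; consequently the open mapping/closed graph theorem yields that $R(\lambda,A_{P,L})\colon L^2(\mE)\to D(A)$ is bounded when $D(A)$ is equipped with the Hilbert space structure described in lemma~\ref{lem:sch}.

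Next I would factor $R(\lambda,A_{P,L})=j\circ \widetilde{R}(\lambda,A_{P,L})$, where $\widetilde R(\lambda,A_{P,L})\colon L^2(\mE)\to D(A)$ is the bounded operator just described and $j\colon D(A)\hookrightarrow L^2(\mE)$ is the continuous inclusion. By lemma~\ref{lem:sch}, $j$ belongs to the $p$th Schatten class for every $p>1$. Since each Schatten ideal is a two-sided ideal in $\mathcal L(L^2(\mE))$, the composition $R(\lambda,A_{P,L})=j\circ \widetilde R(\lambda,A_{P,L})$ is $p$th Schatten class for all $p>1$; in particular it is Hilbert--Schmidt, hence compact. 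The resolvent identity then propagates compactness (and Schatten membership) to every $\mu\in\rho(A_{P,L})$.

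Finally, with a compact resolvent in hand, the Riesz--Schauder spectral theory for compact operators implies that $\sigma\bigl(R(\lambda,A_{P,L})\bigr)\setminus\{0\}$ consists of at most countably many eigenvalues of finite algebraic multiplicity accumulating only at $0$, and the spectral mapping theorem for the resolvent translates this into the statement that $\sigma(A_{P,L})$ consists entirely of isolated eigenvalues of finite algebraic multiplicity, i.e., the spectrum is purely point. No step here presents a genuine obstacle; the only point requiring some care is ensuring that the bound on $\widetilde R(\lambda,A_{P,L})$ into $D(A)$ is indeed in the Hilbert space norm used in lemma~\ref{lem:sch}, which follows because $A u = A_{P,L}u$ on the common domain and $\|Au\|_{L^2(\mE_d)}$ controls $\|u''_d\|_{L^2(\mE_d)}$ and $\|u'_t\|_{L^2(\mE_t)}$ separately.
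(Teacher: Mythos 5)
Your argument is correct and is exactly the one the paper intends: the paper gives no explicit proof beyond the remark ``taking into account lemma~\ref{lem:sch}, we promptly obtain the following,'' i.e.\ it relies on precisely your factorization of the resolvent through the embedding $D(A)\hookrightarrow L^2(\mE)$ of $p$th Schatten class, the ideal property, and Riesz--Schauder theory. Your write-up merely makes explicit the closed-graph/graph-norm step that the paper leaves implicit.
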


\subsection{Non-zero eigenvalues}
In order to determine the pure point spectrum of $A_{P,L}$, a natural Ansatz for finding eigenfunctions is to take $k\in \mathbb C\setminus\{0\}$ and to consider 
\begin{align*}
\phi(x,k)=\begin{cases} \alpha_{di}(k)e^{ikx} + \beta_{di}(k)e^{-ikx}, & x\in \me_{di}, \ i=1, \ldots, D, \\ \gamma_{tj}(k) e^{k^2 x}, & x\in \me_{tj},\ j=1, \ldots, T. \end{cases}
\end{align*}
The boundary conditions $(P+L)\underline{\phi(\cdot,k)}+P^\perp\underline{\underline{\phi(\cdot,k)}}=0$ are encoded in
\begin{align*}
[(P+L)X(k)+ P^\perp Y(k)]\begin{bmatrix}\alpha_d(k) \\ \beta_d(k) \\ \gamma_t(k) \end{bmatrix}=0,
\end{align*}
where $\{\alpha_d(k)\}_{i=1, \ldots, D}=\alpha_{di}(k)$, $\{\beta_d(k)\}_{i=1, \ldots, D}=\beta_{di}(k)$, $\{\gamma_t(k)\}_{i=j, \ldots, T}=\gamma_{tj}(k)$ are the sought after coefficients. The matrices 
\begin{align*}
X(k)&= \begin{bmatrix} e^{ik{\au}_d} & e^{-ik{\au}_d} & 0 \\ \mathds{1} & \mathds{1} & 0 \\ 0 & 0 & \tfrac{1}{\sqrt{2}}(\mathds{1}+e^{k^2 {\au}_t}) \end{bmatrix}, \\
Y(k)&= \begin{bmatrix} ik e^{ik{\au}_d} & -ik e^{-ik{\au}_d} & 0 \\ -ik & ik & 0 \\ 0 & 0 & \tfrac{1}{\sqrt{2}}(\mathds{1}-e^{k^2{\au}_t}) \end{bmatrix}
\end{align*}
acting in $\mathcal{H}$ are given with respect to the decomposition $\mathcal{H}=\mathcal{H}^+_d\oplus\mathcal{H}^-_d\oplus\mathcal{H}_t$. Here, the notation 
\begin{eqnarray*}
\{e^{\pm k^2{\au}_{t}}\}_{j,l=1, \ldots, T}=\delta_{jl} e^{\pm k^2{a}_{tj}} & \mbox{and} & \{e^{\pm ik{\au}_{d}}\}_{j,l=1, \ldots, D}=\delta_{jl} e^{\pm ik{a}_{dj}}
\end{eqnarray*}
is used, where $\delta_{jl}$ is the Kronecker delta. Accordingly, the following holds. 
\begin{prop}
For all orthogonal projectors $P$ {in} ${\mathcal H}$ and all linear operators $L$ acting {in} ${\Ker}P$, the number $-k^2\in\C\setminus\{0\}$ is an eigenvalue of $A_{P,L}$ if and only if the matrix
$$Z_{P,L}(k):=[(P+L)X(k)+P^\perp Y(k)],$$
has non trivial null space. The geometric multiplicity of $-k^2$ equals the dimension of ${\Ker}Z_{P,L}(k)$. 
\end{prop}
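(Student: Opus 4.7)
The plan is to show that the eigenvalue equation $A_{P,L}\phi = -k^2 \phi$, for $k\in\mathbb C\setminus\{0\}$, is equivalent to a finite-dimensional linear system $Z_{P,L}(k)\xi=0$ where $\xi=(\alpha_d,\beta_d,\gamma_t)^\top$, and that the correspondence $\xi\leftrightarrow\phi$ is a bijection between $\ker Z_{P,L}(k)$ and the eigenspace $\ker(A_{P,L}+k^2)$.

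First, I would observe that any $\phi\in D(A_{P,L})$ satisfying $A_{P,L}\phi=-k^2\phi$ automatically solves the two uncoupled ODE systems $\phi_d''=-k^2\phi_d$ on $\mathcal E_d$ and $-\phi_t'=-k^2\phi_t$ on $\mathcal E_t$ in the weak (hence classical, by elliptic/ODE regularity on each edge) sense. Since $k\neq 0$, the characteristic roots $\pm ik$ are distinct on each diffusion edge, so every component $\phi_{di}$ is of the form $\alpha_{di}e^{ikx}+\beta_{di}e^{-ikx}$, and every $\phi_{tj}$ has the form $\gamma_{tj}e^{k^2x}$. This shows that any eigenfunction is captured by the Ansatz and that the assignment $\xi\mapsto\phi$ is surjective onto the solution space of the ODE system (before imposing boundary conditions). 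The map is also injective, because for $k\neq 0$ the functions $e^{ikx}$ and $e^{-ikx}$ are linearly independent on each interval $(0,a_{di})$, and $e^{k^2x}$ is nowhere zero on $(0,a_{tj})$; hence $\phi\equiv 0$ forces all coefficients to vanish.

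Second, a direct computation of the boundary vectors $\underline{\phi(\cdot,k)}$ and $\underline{\underline{\phi(\cdot,k)}}$ from the definition \eqref{Randwerte}, applied to the Ansatz, yields precisely
\[
\underline{\phi(\cdot,k)}=X(k)\xi,\qquad \underline{\underline{\phi(\cdot,k)}}=Y(k)\xi,
\]
where the block structure of $X(k)$ and $Y(k)$ matches the decomposition $\mathcal H=\mathcal H_d^+\oplus\mathcal H_d^-\oplus\mathcal H_t$. Substituting into the boundary condition $P^\perp\underline{\underline{\phi}}+(L+P)\underline{\phi}=0$ that defines $D(A_{P,L})$, we obtain
\[
\bigl[(P+L)X(k)+P^\perp Y(k)\bigr]\xi = Z_{P,L}(k)\xi = 0.
\]
Thus $\phi$ is an eigenfunction for $-k^2$ if and only if its coefficient vector $\xi$ lies in $\ker Z_{P,L}(k)$.

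Combining surjectivity (every eigenfunction arises from some $\xi$) with injectivity (distinct $\xi$ yield distinct $\phi$), the linear map $\xi\mapsto\phi$ restricts to an isomorphism between $\ker Z_{P,L}(k)$ and $\ker(A_{P,L}+k^2)$. In particular, $-k^2\neq 0$ is an eigenvalue exactly when $\ker Z_{P,L}(k)\neq\{0\}$, and the geometric multiplicity equals $\dim\ker Z_{P,L}(k)$. I expect no serious obstacle: the only point requiring care is the injectivity step, which genuinely uses $k\neq 0$ (for $k=0$ the two diffusion exponentials collapse and the Ansatz basis becomes linearly dependent, explaining why the case $-k^2=0$ is excluded from the statement).
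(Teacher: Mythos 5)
Your argument is correct and is exactly the reasoning the paper relies on: the paper gives no separate proof but derives the proposition directly from the Ansatz and the identities $\underline{\phi}=X(k)\xi$, $\underline{\underline{\phi}}=Y(k)\xi$, which you verify, together with the observation that for $k\neq 0$ the Ansatz parametrizes the full solution space of the edge-wise ODEs bijectively. Your explicit injectivity/surjectivity discussion (and the remark on why $k=0$ must be excluded) merely fills in details the paper leaves implicit.
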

Hence, the secular equation is
$$\det Z_{P,L}(k)=0.$$
In general, it seems to be difficult to give precise statements on the distribution of the eigenvalues.

\begin{ex}\label{exrealspec}
Consider the graph consisting of one diffusion edge of length $a_{d1}$ and one transport edge of length $a_{t1}$. Let 
\begin{eqnarray*}
P=\begin{bmatrix} 1 & 0 & 0 \\ 0 & 2^{-1} & 2^{-1} \\ 0 & 2^{-1} & 2^{-1}\end{bmatrix} &\mbox{and} & L=0.
\end{eqnarray*}
Then the operator $A_{P,L}$ is $m$-dissipative and the secular equation becomes
\begin{eqnarray*}
\det Z_{P,L}(k)= \frac{i}{\sqrt{2}} \left[ \sin(k a_{d1}) \left(1- e^{k^2 a_{t1}}\right) + k \cos(k a_{d1}) \left(1+ e^{k^2 a_{t1}}\right) \right] =0.
\end{eqnarray*}
In particular, the spectrum of $A_{P,L}$ contains a sequence of real eigenvalues going to $-\infty$.
\end{ex}

\subsection{Eigenvalue zero}
For the eigenvalue zero, one uses for the eigenfunctions the Ansatz 
\begin{align*}
\phi_0(x)=\begin{cases} \alpha_{di}(0) + \beta_{di}(0)x, & x\in \me_{di}, \ i=1, \ldots, D, \\ \gamma_{tj}(0), & x\in \me_{tj},\ j=1, \ldots, T. \end{cases}
\end{align*}
The boundary conditions $(P+L)\underline{\phi(\cdot,0)}+P^\perp\underline{\underline{\phi(\cdot,0)}}=0$ are encoded in 
\begin{align*}
[(L+P)X^0+ P^\perp Y^0]\begin{bmatrix}\alpha_d(0) \\ \beta_d(0) \\ \gamma_t(0) \end{bmatrix}=0
\end{align*}
with
\begin{eqnarray*}
X^0= \begin{bmatrix} \mathds{1} & {\au}_d & 0 \\ \mathds{1} & 0 & 0 \\ 0 & 0 & \sqrt{2} \end{bmatrix} & \mbox{and} &
Y^0= \begin{bmatrix} 0 & \mathds{1} & 0 \\ 0 & -\mathds{1} & 0 \\ 0 & 0 & 0 \end{bmatrix},
\end{eqnarray*}
where $\{\au_d\}_{j,l=1, \ldots, D}=\delta_{jl} a_{dj}$. Hence we obtain a characterization for the eigenvalue zero.
\begin{prop}
For all orthogonal projectors $P$ {in} ${\mathcal H}$ and all linear operators $L$ acting {on} ${\Ker}P$, the operator $A_{P,L}$ has eigenvalue zero if and only if
\begin{eqnarray*}
\det Z^0_{P,L} = 0, & \mbox{where} & Z^0_{P,L}=(P+L)X^0+P^\perp Y^0.
\end{eqnarray*}
\end{prop}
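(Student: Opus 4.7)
The plan is to mirror, step by step, the argument just given for the non-zero eigenvalues. Because the exponentials $e^{ikx},e^{-ikx}$ coalesce in the limit $k\to 0$, the Ansatz used for the non-zero case degenerates and must be replaced by the general solution of the ordinary differential equations $u_d''=0$ and $u_t'=0$; this is exactly the Ansatz displayed in the proposition, namely $u_{di}(x)=\alpha_{di}+\beta_{di}x$ on diffusion edges and $u_{tj}(x)=\gamma_{tj}$ on transport edges. Conversely, any $u\in D(A_{P,L})\subset H^2(\mE_d)\oplus H^1(\mE_t)$ with $A_{P,L}u=Au=0$ has to be of this form, so the Ansatz captures the full kernel.

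Next I would substitute this Ansatz into the boundary-value vectors $\underline{u}$ and $\underline{\underline{u}}$ defined in \eqref{Randwerte}. A direct computation shows
\[
\underline{\phi_0}=X^{0}\begin{bmatrix}\alpha_d\\ \beta_d\\ \gamma_t\end{bmatrix},\qquad
\underline{\underline{\phi_0}}=Y^{0}\begin{bmatrix}\alpha_d\\ \beta_d\\ \gamma_t\end{bmatrix},
\]
since the top block of $\underline{\phi_0}$ is $\{\alpha_{di}+\beta_{di}a_{di}\}_i$, the middle block is $\{\alpha_{di}\}_i$, and the transport block is $2^{-1/2}(\gamma_{tj}+\gamma_{tj})=\sqrt{2}\gamma_{tj}$; correspondingly, the first two blocks of $\underline{\underline{\phi_0}}$ are $\{\beta_{di}\}_i$ and $\{-\beta_{di}\}_i$, while the transport block is $2^{-1/2}(-\gamma_{tj}+\gamma_{tj})=0$. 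These formulas match exactly the matrices $X^0$ and $Y^0$ stated in the proposition.

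Finally, the boundary conditions $(P+L)\underline{\phi_0}+P^\perp\underline{\underline{\phi_0}}=0$ become the linear system
\[
Z^0_{P,L}\begin{bmatrix}\alpha_d\\ \beta_d\\ \gamma_t\end{bmatrix}=0,\qquad
Z^0_{P,L}=(P+L)X^{0}+P^\perp Y^{0},
\]
acting on the finite-dimensional space of coefficients. Hence $0$ is an eigenvalue of $A_{P,L}$ if and only if this system admits a non-trivial solution, that is, $\det Z^0_{P,L}=0$; moreover, the geometric multiplicity of the eigenvalue equals $\dim\ker Z^0_{P,L}$.

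I do not expect a significant obstacle: the argument is purely linear-algebraic once one keeps in mind that the Ansatz must be taken from the null space of the differential expression rather than from an exponential family. The only point worth a line of justification is that $L$ is extended by zero from $\ker P$ to all of $\mathcal{H}$ (as stipulated in the definition of $A_{P,L}$), so that the product $(P+L)X^0$ is unambiguously defined.
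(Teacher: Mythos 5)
Your proposal is correct and follows exactly the route the paper takes: the paper's own (very terse) argument consists precisely of the affine/constant Ansatz for the kernel of the differential expression, the computation showing $\underline{\phi_0}=X^0(\alpha_d,\beta_d,\gamma_t)^{\top}$ and $\underline{\underline{\phi_0}}=Y^0(\alpha_d,\beta_d,\gamma_t)^{\top}$, and the resulting reduction of the boundary conditions to the finite-dimensional system $Z^0_{P,L}(\alpha_d,\beta_d,\gamma_t)^{\top}=0$. Your verification of the block entries of $X^0$ and $Y^0$ matches the paper's matrices, and your remark that the coefficient-to-function map is injective (so the multiplicity equals $\dim\Ker Z^0_{P,L}$) is a small but welcome addition the paper leaves implicit.
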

In particular, the invertibility of the operator $A_{P,L}$ for $L$ satisfying \eqref{eq:maincond} is independent of the lengths of the transport edges. For the situation considered in subsection \ref{subs:dendro-2}, for example, one has that the operator $A_{P,L}$ is not invertible for any edge lengths.

\begin{ex}
Consider as in example ~\ref{exrealspec} the graph consisting of one diffusion edge of length $a_{d1}$ and one transport edge of length $a_{t1}$. Let 
\begin{eqnarray*}
P=\begin{bmatrix} 1 & 0 & 0 \\ 0 & 2^{-1} & 2^{-1} \\ 0 & 2^{-1} & 2^{-1}\end{bmatrix} &\mbox{and} & L_C=CP^\perp, 
\end{eqnarray*}
where $P^\perp=\mbox{Id}-P$ and $C\in\C$. Then 
$$\det Z^0_{P,L_C}= -2^{-\frac{1}{2}} - C a_{d1} 2^{\frac{1}{2}},$$
and therefore $A_{P,L_C}$ is invertible for $\Real  C\geq 0$.
\end{ex} 

\subsection{The resolvent operator}
In the following, an explicit formula for the resolvent is given in terms of the boundary conditions and the edge lengths. First we define the shorthand notation 
\begin{eqnarray*}
\displaystyle{\int_{\mG} u := \sum_{i\in \mE_d} \int_{0}^{a_{di}} u_{di}(x) dx + \sum_{j\in \mE_t} \int_{0}^{a_{tj}} u_{tj}(x) dx,} & \mbox{for} & u\in L^2(\mE).
\end{eqnarray*}

\begin{prop}\label{resolvent}
For all orthogonal projectors $P$ {in} ${\mathcal H}$, all linear operators $L$ acting on ${\Ker}P$ and for all $k\neq 0$ such that $-k^2\in \rho(A_{P,L})$, the resolvent operator $R(k)=(A_{P,L}+k^2)^{-1}$ is the operator given by
$$R(k)u(x):= \int_{\mG} r(x,\cdot,k)u(\cdot) $$
with integral 	kernel 
\begin{align*}
r(x,y,k):=\left\{r_0(x,y,k) - \Phi(x,k) \Sigma_{P,L}(k)\Psi(y,k) \right\} W(k).
\end{align*}
Here we have denoted 
$$ \Sigma_{P,L}(k):= Z_{P,L}(k)^{-1} [P^\perp R_1(k)+ (L+P) R_2(k)]$$ 
and furthermore
\begin{eqnarray*}
r_0(x,y,k)&:=& \begin{bmatrix} r_d(x,y,k) & 0 \\ 0 & r_t(x,y,k^2) \end{bmatrix},\\ 
W(k)&:=& \begin{bmatrix} \frac{i}{2k}\mathds{1}_{\C^{|\mE_d|}} & 0 \\ 0 & \mathds{1}_{\C^{|\mE_t|}} \end{bmatrix},\\
\{r_d(x,y,k)\}_{n,m}&:=&\delta_{n,m} e^{ik\abs{x-y}}, \quad n,m=1, \ldots, D,\\
\{r_t(x,y,k)\}_{j,l}&:=&\delta_{j,l} \left(\begin{cases}e^{k^2(x-y)}, & x<y \\ 0 , & x\geq y \end{cases}\right), \quad j,l=1,\ldots, T.
\end{eqnarray*} 
Finally, 
\begin{align*}
\Phi(x,k):= \begin{bmatrix} e^{ikx} & e^{-ikx} & 0 \\ 0 & 0 & e^{k^2 x} \end{bmatrix}, && \Psi(x,k):= \begin{bmatrix} e^{iky}& 0 \\ e^{-iky}& 0 \\ 0 & e^{-k^2y} \end{bmatrix},
\end{align*}
where the entries are diagonal matrices whose entries are functions with arguments from the corresponding edges and 
\begin{align*}
R_1(k):= \begin{bmatrix} ik \mathds{1}_{\C^{|\mE_d|}} & 0 & 0 \\ 0 & ik e^{ik{\au}_d} & 0 \\ 0 & 0 & \tfrac{1}{\sqrt{2}} \mathds{1}_{\C^{|\mE_t|}} \end{bmatrix}, 
&& R_2(k):	= \begin{bmatrix} \mathds{1}_{\C^{|\mE_d|}} & 0 & 0 \\ 0 & e^{ik{\au}_d} & 0 \\ 0 & 0 & \tfrac{1}{\sqrt{2}}\mathds{1}_{\C^{|\mE_t|}} \end{bmatrix}.
\end{align*}
\end{prop}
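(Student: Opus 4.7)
The plan is to solve the resolvent equation $(A_{P,L}+k^2)u=f$ constructively, by first writing the general solution on each edge in terms of a particular solution plus free integration constants, and then fixing these constants via the boundary conditions $(L+P)\underline{u}+P^\perp\underline{\underline{u}}=0$.

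First I would build the general solution of $(A+k^2)u=f$ ignoring the boundary conditions. On each diffusion edge $\me_{di}$ the equation $u_{di}''+k^2 u_{di}=f_{di}$ is solved by $\frac{i}{2k}\int_0^{a_{di}} e^{ik|x-y|} f_{di}(y)\,dy$ plus the two homogeneous modes $e^{\pm ikx}$; on each transport edge $\me_{tj}$ the equation $-u_{tj}'+k^2 u_{tj}=f_{tj}$ is solved by $\int_x^{a_{tj}} e^{k^2(x-y)} f_{tj}(y)\,dy$ plus the homogeneous mode $e^{k^2 x}$. Collecting these, every solution takes the form
\[
u(x)=\int_{\mG} r_0(x,y,k)\,W(k)\,f(y)\,dy + \Phi(x,k)\,c,
\]
with free vector $c=(\alpha_d,\beta_d,\gamma_t)^T\in\mathcal{H}$, using precisely the matrices $r_0$, $W$, and $\Phi$ defined in the statement.

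Second, I would compute the boundary vectors of each piece. For the homogeneous part $\Phi(\cdot,k)c$ the very computation already appearing in the derivation of the secular equation gives $\underline{\Phi c}=X(k)c$ and $\underline{\underline{\Phi c}}=Y(k)c$. For the particular part $v$ with kernel $r_0 W$, a direct evaluation of $v_{di}$, $v_{di}'$ at $0$ and $a_{di}$ and of $v_{tj}$ at $0$ and $a_{tj}$, arranged according to the definition~\eqref{Randwerte} of $\underline{\cdot}$ and $\underline{\underline{\cdot}}$, should yield
\[
\underline{v} = \int_{\mG} R_2(k)\,\Psi(y,k)\,f(y)\,dy,\qquad \underline{\underline{v}} = \int_{\mG} R_1(k)\,\Psi(y,k)\,f(y)\,dy.
\]
For instance on the transport part, $v_{tj}(a_{tj})=0$ whereas $v_{tj}(0)=\int_0^{a_{tj}} e^{-k^2 y} f_{tj}(y)\,dy$, which reproduces the lower-right blocks $\tfrac{1}{\sqrt{2}}\mathds{1}$ of both $R_1$ and $R_2$; the diffusion blocks come from the jump in the derivative of $|x-y|$ at the endpoints. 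Substituting these into $(L+P)\underline{u}+P^\perp\underline{\underline{u}}=0$ gives
\[
Z_{P,L}(k)\,c = -\bigl[(L+P)R_2(k)+P^\perp R_1(k)\bigr]\int_{\mG} \Psi(y,k)\,f(y)\,dy.
\]
The hypothesis $-k^2\in\rho(A_{P,L})$ forces $Z_{P,L}(k)$ to be invertible, for otherwise the preceding proposition would furnish a non-trivial eigenfunction associated with $-k^2$. Solving for $c$ and plugging back into the expression for $u$ then produces the asserted kernel $r(x,y,k)=\{r_0(x,y,k)-\Phi(x,k)\Sigma_{P,L}(k)\Psi(y,k)\}W(k)$.

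The main obstacle I expect is the careful bookkeeping of the factors $2^{-1/2}$ and of signs in the transport block of $\underline{u}$ and $\underline{\underline{u}}$, since the third component of each vector mixes the values at $0$ and at $a_{tj}$; because the particular solution on a transport edge vanishes at one endpoint only, the resulting identifications are asymmetric and must be matched against $R_1,R_2,\Psi$ with some care. Once this match is verified, the remainder is purely formal linear algebra, and one finishes by noting that the $u$ so constructed manifestly lies in $D(A_{P,L})$ and satisfies $(A_{P,L}+k^2)u=f$, so the right-inverse obtained coincides with $R(k)=(A_{P,L}+k^2)^{-1}$.
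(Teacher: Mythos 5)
Your proposal is correct and rests on exactly the same computations as the paper's proof: the edge-wise Green's functions encoded in $r_0(x,y,k)W(k)$, the boundary-value identities $\underline{\Phi(\cdot,k)c}=X(k)c$, $\underline{\underline{\Phi(\cdot,k)c}}=Y(k)c$, $\underline{R_0(k)f}=R_2(k)\int_{\mG}\Psi(\cdot,k)W(k)f$, $\underline{\underline{R_0(k)f}}=R_1(k)\int_{\mG}\Psi(\cdot,k)W(k)f$, and the invertibility of $Z_{P,L}(k)$ when $-k^2\in\rho(A_{P,L})$. The only differences are of direction and bookkeeping: you derive the kernel by solving for the free constants $c$ whereas the paper verifies that the stated kernel is a Green's function landing in $D(A_{P,L})$, and your intermediate displays for $\underline{v}$, $\underline{\underline{v}}$ and the equation for $c$ drop the factor $W(k)$ that reappears correctly in your final formula.
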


\begin{proof}
It is sufficient to prove that $r(x,y,k)$ defines the Green's function of the operator $(A_{P,L}+k^2)$. Consider the unperturbed operator 
$$R_0(k)u(x)= \int_{\mG} r_0(x,\cdot,k)W(k) u(\cdot)\qquad \hbox{for }u \in \left(\bigoplus_{i=1}^D C^{\infty}_0([0,a_{di}]) \bigoplus_{j=1}^T C^{\infty}_0([0,a_{tj}])\right).$$ 
The equation $(A+k^2) R_0(k)u =u$ is satisfied on the diffusion edges as $\tfrac{i}{2k} e^{ik\abs{x-y}}$ is the Green's function of $L_d(k)=\tfrac{d^2}{dx^2}-k^2$ on the whole real line (this follows from standard arguments using the Fourier transform). By continuing functions $u_i\in C_0^{\infty}([0,a_{di}])$ trivially to the real line the claim follows. Similarly, the diagonal entries of $r_t(x,y,k)$ are the Green's function for $L_t(k)=-\tfrac{d}{dx}-k^2$ on the whole real line, which follows from standard arguments from the theory of ordinary differential equations. Again by continuing functions $u_j$ in $C^{\infty}_0([0,a_{tj}])$ trivially to the real line the claim follows. 

Note that for the correction term one has 
$$(A+k^2) \int_{\mG} \Phi(x,k) \Sigma_{P,L}(k)\Psi(\cdot,k) W(k)u(\cdot)  =0.$$ 
Therefore $(A+k^2) R(k)u =u$. As $\left(\bigoplus_{i=1}^D C^{\infty}_0([0,a_{di}]) \bigoplus_{j=1}^T C^{\infty}_0([0,a_{tj}])\right)$ is dense in $L^2(\mE)$ and for all $k\neq 0$ such that $-k^2\in \rho(A_{P,L})$ $r(\cdot,\cdot,k)$ defines a bounded linear operator on $L^2(\mE\times \mE)$. One concludes by density that $(A+k^2) R(k)u =u$ for all $u\in L^2(\mE)$. 

It remains to prove that $R(k)u\in D(A_{P,L})$. Observe that for all $a>0$ and all $u\in L^2[(0,a)]$,
\begin{align*}
\bigg[ \int_{0}^{a} e^{ik\abs{x-y}}u(y)dy\bigg]_{x=0}=& \int_0^{a} e^{iky}u(y)dy, \\
\bigg[ \int_{0}^{a} e^{ik\abs{x-y}}u(y)dy\bigg]_{x=a}=& e^{ika} \int_0^{a} e^{-iky}u(y)dy, \\
\bigg[-\frac{d}{dx} \int_{0}^{a} e^{ik\abs{x-y}}u(y)dy \bigg]_{x=0}=& ik \int_0^{a} e^{iky}u(y)dy, \\
\bigg[\frac{d}{dx} \int_{0}^{a} e^{ik\abs{x-y}}u(y)dy \bigg]_{x=a}=& ik e^{ika} \int_0^{a} e^{-iky}u(y)dy,
\end{align*}
 and considering only the edge $[0,a]$,
\begin{align*}
 \bigg[\int_{0}^{a}r_t(x,y,k^2) u(y)dy \bigg]_{x=0} &= \int_0^{a} e^{-k^2y}f(y)dy, \\
\bigg[ \int_{0}^{a} r_t(x,y,k^2)u(y)dy \bigg]_{x=a}&= 0.
\end{align*}
This gives in the matrix notation for $u\in L^2(\mE)$ and $v\in \mathcal H$
\begin{eqnarray*}
 \underline{R_0(k)u(x)}=R_2(k)\int_{\mG}\Psi(k,\cdot)W(k)u(\cdot), & \underline{\Phi(x,k)v}=X(k)v, \\
 \underline{\underline{R_0(k)u(x)}}=R_1(k)\int_{\mG}\Psi(k,\cdot)W(k)u(\cdot), & \underline{\underline{\Phi(x,k)v}}=Y(k)v.
\end{eqnarray*}
Therefore, 
\begin{align*}
\underline{\int_{\mG} r(x,\cdot,k)u(\cdot)}=& \left(R_2(k) - X(k)\Sigma_{P,L}(k) \right)\int_{\mG}\Psi(\cdot,k)W(k)u(\cdot), \\ 
\underline{\underline{\int_{\mG} r(x,\cdot,k)u(\cdot)}}=& \left(R_1(k) - Y(k)\Sigma_{P,L}(k) \right)\int_{\mG}\Psi(\cdot,k)W(k)u(\cdot),
\end{align*}
and hence for all $u\in L^2(\mE)$,
\begin{align*}
(P+L) \underline{\underline{\int_{\mG} r(x,\cdot,k)f(\cdot)}} + P^\perp\underline{\int_{\mG} r(x,\cdot,k)f(\cdot)} =0.
\end{align*}
This proves that $r(\cdot,\cdot,k)$ is the Green's function for the mixed problem.
\end{proof}

\section{Appendix: A reminder of semigroup theory}

In this final section, we are going to recollect some  results from the general theory of strongly continuous  semigroups, in order to make the technique of this article more transparent to the reader less familiar with it. In particular, we are interested in the case of generators that are neither skew-adjoint nor self-adjoint. We refer to~\cite{AreBatHie01,EngNag00,EngNag06} for a comprehensive introduction and overview to modern semigroup theory.

Let in the following $H$ be a complex Hilbert space with scalar product $\langle \cdot, \cdot\rangle$ and induced norm $\norm{\cdot}$. It is well known that all strongly continuous semigroups $(T(t))_{t\ge 0}$ of bounded linear operators on a Hilbert space are \emph{exponentially bounded}, i.e. there exist constants $M\ge 1$ and $\omega\in \mathbb R$ such that
\[
\|T(t)\|\le Me^{\omega t}\qquad \hbox{for all }t\ge 0.
\]
Furthermore, a semigroup is called
\begin{itemize}
\item \emph{$\omega$-quasi-contractive}, for some $\omega \in \mathbb R$, if
\[
\|T(t)\|\le e^{\omega t}\qquad \hbox{for all }t\ge 0.
\]
\item \emph{quasi-contractive} if it is $\omega$-quasi-contractive for some $\omega \in \mathbb R$, and finally
\item \emph{contractive} if  it is $\omega$-quasi-contractive for $\omega=0$.
\end{itemize}
For some $\omega\in \mathbb R$, a closed and densely defined operator $A$ is called   \emph{$\omega$-quasi-dissipative} if
\[
\Real \langle Au,u\rangle \le \omega \|u\|^2\qquad \hbox{for all }u\in D(A).
\] 
For a given $\omega \in \mathbb R$, the operator $A$ is called  \emph{$\omega$-quasi-$m$-dissipative} if it is $\omega$-quasi-dissipative and additionally  $\lambda-A$ is surjective for some $\lambda>\omega$. An operator $A$ is called \emph{quasi-$m$-dissipative} (resp., \emph{quasi-dissipative}) if it is \emph{$\omega$-quasi-$m$-dissipative} (resp., \emph{$\omega$-quasi-dissipative}) for some $\omega \in \mathbb R$.

By the Lumer--Phillips Theorem, a semigroup is $\omega$-quasi-contractive if and only if its (necessarily closed and densely defined) infinitesimal generator $A$ is $\omega$-quasi-$m$-dissipative, cf~\cite[theorem~3.4.5]{AreBatHie01}.
While the above range condition can be sometimes hard to check directly, it is known that an operator is $\omega$-quasi-$m$-dissipative if in particular both the operator and its adjoint are $\omega$-quasi-dissipative.

In addition to their central role in the Lumer--Phillips Theorem, quasi-$m$-dissipative operators are important since it is possible to represent the semigroup generated by them by means of a suitable generalization of the inverse Laplace transform. More precisely, if  $A$ is the $\omega$-quasi-$m$-dissipative infinitesimal generator of the semigroup $T(t)$, then the formula
\begin{equation}\label{laplacetrans}
T(t)u= \lim_{n\to \infty}\int_{\varepsilon- i n}^{\varepsilon+ i n} e^{t\lambda} \left(A-\lambda\right)^{-1}  u \, d\lambda,\qquad u\in H,
\end{equation}
holds for any $\varepsilon>\omega$, see~\cite[theorem~3.12.2]{AreBatHie01}. 

Let us now consider the case where $H$ is a complex-valued $L^2(X)$-space for some $\sigma$-finite measure space $(X,\mu)$. Then a bounded linear operator $T$ on $H$ is called
\begin{itemize}
\item \emph{real} if $Tf(x)\in \mathbb R$ for $\mu$-a.e.\ $x\in X$, whenever $f(x)\in \mathbb R$ for  $\mu$-a.e.\ $x\in X$;
\item \emph{irreducible} if  it does not leave invariant any non-trivial ideal of $L^2(X)$ -- i.e. it does not leave invariant the subspace $L^2(\tilde{X})$ of $L^2(X)$ for any measurable subset $\tilde{X}$ of $X$ with $\tilde{X}\not=\emptyset$ and $\tilde{X}\not=X$.
\end{itemize}
An operator semigroup $(T(t))_{t\ge 0}$ on $H=L^2(X)$  is called \emph{real} (resp. \emph{irreducible}) if each operator $T(t)$ is real (resp. irreducible).

\end{document}